\newtheorem{thm}{Theorem}[section]
\newtheorem{lemm}{Lemma}[section]
\newtheorem{rema}{Remark}[section]
\newtheorem{remark}{Remark}[section]
\numberwithin{equation}{section}
\definecolor{newcolor1}{rgb}{.8,.349,.1}
\colorlet{bblue}{blue!50!black}
\def\X{\mbox{\boldmath $X$}}
\def\e{\mbox{\boldmath $e$}}
\def\f{\mbox{\boldmath $f$}}
\def\F{\mbox{\boldmath $F$}}
\def\g{\mbox{\boldmath $g$}}
\def\m{\mbox{\boldmath $m$}}
\def\q{\mbox{\boldmath $q$}}
\def\x{\mbox{\boldmath $x$}}
\newcommand\dtk {{k}}
\def\0{\mbox{\boldmath $0$}}
\def\um{\underline{\bm m}}
\begin{document}

\title[Convergence analysis for the LL equation with large damping]{Convergence analysis of a second-order accurate, linear numerical scheme for the Landau-Lifshitz equation with large damping parameters}

\author{Yongyong Cai}
\address{Laboratory of Mathematics and Complex Systems and School of Mathematical Sciences \\ Beijing Normal University\\ Beijing\\ China.}
\email{yongyong.cai@bnu.edu.cn}

\author{Jingrun Chen}
\address{School of Mathematical Sciences\\ Soochow University\\ Suzhou\\ China.}
\email{jingrunchen@suda.edu.cn (Corresponding author)}

\author{Cheng Wang} 
\address{Mathematics Department\\ University of Massachusetts\\ North Dartmouth \\ MA 02747\\ USA.}
\email{cwang1@umassd.edu}

\author{Changjian Xie} 
\address{Department of Mathematics\\ The Hong Kong University of Science and Technology \\ Clear Water Bay\\ Kowloon\\ Hong Kong\\ China.}
\email{macjxie@ust.hk}
%\thanks{Support information for the second author.}

%    General info
\subjclass[2010]{35K61, 65M06, 65M12}

\date{\today}

%\dedicatory{This paper is dedicated to our advisors.}

\keywords{Landau-Lifshitz equation, large damping parameters, linear numerical scheme, second order accuracy, convergence analysis, stability estimate for the projection step}

%\author{Yongyong Cai}
%\address{School of Mathematical Sciences\\ Beijing Normal University\\ Beijing \\ China.}
%\email{yongyong.cai@bnu.edu.cn}

%\author{Jingrun Chen}
%\address{School of Mathematical Sciences\\ Soochow University\\ Suzhou\\ China.}
%\email{jingrunchen@suda.edu.cn}
%\thanks{Support information for the second author.}

%    Address of record for the research reported here
%\address{Department of Mathematics, Louisiana State University, Baton
%Rouge, Louisiana 70803}
%    Current address
%\curraddr{Department of Mathematics and Statistics,
%Case Western Reserve University, Cleveland, Ohio 43403}
%\email{xyz@math.university.edu}
%    \thanks will become a 1st page footnote.

%\thanks{The first author was supported in part by NSF Grant \#000000.}

%\author{Cheng Wang}
%\address{Mathematics Department\\ University of Massachusetts\\ North Dartmouth\\ MA 02747\\ USA.}
%\email{cwang1@umassd.edu}
%\thanks{Support information for the second author.}

%\author{Changjian Xie}
%\address{School of Mathematical Sciences\\ Soochow University\\ Suzhou\\ China.}
%\email{20184007005@stu.suda.edu.cn}

%    General info
%\subjclass[2010]{35K61, 65N06, 65N12}

%\date{\today}

%\dedicatory{This paper is dedicated to our advisors.}

%\keywords{Landau-Lifshitz equation, linearly implicit scheme, second-order accuracy, large damping parameters}

%\maketitle

\begin{abstract}
%The numerical approximation for the Landau-Lifshitz equation, which models the dynamics of the magnetization in a ferromagnetic material, is taken into consideration. This highly nonlinear equation, with a non-convex constraint, has several equivalent forms, and involves solving an auxiliary problem in the infinite domain. All these features have posed interesting challenges in developing numerical methods. In this paper, we first present a fully discrete semi-implicit method  for solving the Landau-Lifshitz equation based on the second-order backward differentiation formula and the one-sided extrapolation (using previous time-step numerical values). A projection step is further used to preserve the length of the magnetization. Subsequently, we provide a rigorous convergence analysis for the fully discrete numerical solution by the introduction of two sets of approximated solutions where one set of solutions solves the Landau-Lifshitz equation and the other is projected onto the unit sphere. Second-order accuracy in both time and space is obtained provided that the spatial step-size is the same order as the temporal step-size. And also, the unique solvability of the numerical solution without any assumption for the step-size in both time and space is theoretically justified, using a monotonicity analysis. All these theoretical properties are verified by numerical examples in both 1D and 3D spaces. 
A second order accurate, linear numerical method is analyzed for the Landau-Lifshitz equation with large damping parameters. This equation describes the dynamics of magnetization, with a non-convexity constraint of unit length of the magnetization. 
% The numerical insights for the model with large damping constants reproduce the high-accuracy and high-efficiency numerical solution of the magnetization by solving a linearly symmetric system of equations by a fast solver. 
The numerical method is based on the second-order backward differentiation formula in time, combined with an implicit treatment of the linear diffusion term and explicit extrapolation for the nonlinear terms. Afterward, a projection step is applied to normalize the numerical solution at a point-wise level. This numerical scheme has shown extensive advantages in the practical computations for the physical model with large damping parameters, which comes from the fact that only a linear system with constant coefficients (independent of both time and the updated magnetization) needs to be solved at each time step, and has greatly improved the numerical efficiency. Meanwhile, a theoretical analysis for this linear numerical scheme has not been available. In this paper, we provide a rigorous error estimate of the numerical scheme, in the discrete $\ell^{\infty}(0,T; \ell^2)  \cap \ell^2(0,T; H_h^1)$ norm, under suitable regularity assumptions and reasonable ratio between the time step-size and the spatial mesh-size. In particular, the projection operation is nonlinear, and a stability estimate for the projection step turns out to be highly challenging. Such a stability estimate is derived in details, which will play an essential role in the convergence analysis for the numerical scheme, if the damping parameter is greater than 3. 
%The second-order accuracy and efficiency are verified by the numerical examples in both one- and three- dimensional spaces. Micromagnetics simulations indicate that the modified second-order method is unconditionally stable with large damping parameters.
\end{abstract}

%\begin{keywords}
%Landau-Lifshitz equation, large damping parameters, linear numerical scheme, second order accuracy, convergence analysis, stability estimate for the projection step 
%\end{keywords}
%
%\begin{AMS}
%	35K61, 65M06, 65M12
%\end{AMS}

\maketitle

\section{Introduction}
%Ferromagnetic materials can be applied for information storage due to its bibstable states in magnetic-based recording devices~\cite{Prohl2001Computational}, specifically, information is stored by switching the magnetic state using a magnetic field that is generated by electric currents. 
%
%The model for magnetization dynamics is driven by the Landau-Lifshitz (LL) equation \cite{Landau1935On} 

%Gilbert damping effect is observed experimentally in  \cite{GilbertKelly1955,Gilbert:1955} for ferromagnetic materials with the applications of information storage due to its bistable states in magnetic-based recording devices~\cite{Prohl2001Computational,Wei2012MicromagneticsAR}. 
%%A phenomenological theory of the dynamics of the magnetization field is well established by a large amount of experiments. 
%Based on these observations, the phenomenological Landau-Lifshitz (LL) equation \cite{Landau1935On} with damping of the magnetization dynamics is proposed by Gilbert \cite{Gilbert:1955}. 

The Landau-Lifshitz (LL) equation \cite{Landau1935On}, with quasilinearity and the constraint of unit length of magnetization, describes the evolution of the magnetization in ferromagnetic materials with applications of information storage in the magnetic-based recording devices~\cite{Prohl2001Computational}. The nonlinear conservative term of the LL equation preserves the unit length of magnetization and drives the system. The remaining nonlinear part related to the harmonic mapping of the LL equation is dissipated by a factor of damping parameters.   
% This equation consisting of gyromagnetic term and damping term is closely related to the Schr\"odinger map \cite{Yang2012TheDB} which exactly the case without damping term and the harmonic map of heat flow~\cite{guo2008landau}. 
Such a parameter plays an important role for energy evolution, which can be calculated  \cite{Miranda2021MechanismsBL}. 
% The LL equation with small damping parameters \cite{Budhathoki2020LowGD,Lattery2018LowGD,Weber2019GilbertDO} is well studied, and 
%The LL equation with large damping parameters has been studied in a few existing works, see references~\cite{GilbertKelly1955, LiZengPRLGiant2019, Miranda2021MechanismsBL, Tanaka2014MicrowaveAssistedMR} and therein.  

%The numerical methods for the LL equation with small damping parameters have been extensively studied at a theoretical level. Until recently, one is interested in the model with large damping parameters \cite{ Lubich2021,Xie2021BDF2Linear}.  
% Review articles for numerous numerical methods refer to \cite{cimrak2007survey,kruzik2006recent}.
% As far as the temporal discretization is concerned, 
% explicit schemes \cite{jiang2001hysteresis}, fully implicit schemes \cite{fuwa2012finite,bartels2006convergence} and 
There have been extensive numerical works for the LL equation \cite{GilbertKelly1955, LiZengPRLGiant2019, Miranda2021MechanismsBL, Tanaka2014MicrowaveAssistedMR}. One of the most popular temporal discretization is the semi-implicit method \cite{jingrun2019analysis, cimrak2005error, gao2014optimal}, which turns out to remarkably relax restrictions of temporal step-size. 
% As for fully implicit method, it has unconditional stability but not the computational cost due to a nonlinear system of equations needs to be solved at each time step, however, has been successfully used in \cite{Suess2002TimeRM}. 
For example, the linearly implicit backward Euler approach has been well studied in \cite{Lubich2021,alouges2006convergence,jingrun2019analysis,gao2014optimal}. A combination of this numerical idea with a high-order non-conforming finite element discretization in space has been  proposed and analyzed in \cite{alouges2006convergence}, in which a projection is applied to an approximate tangent space to the normality constraint. Moreover, a convergence analysis in both space and time has been established in \cite{Lubich2021}, by evaluating the approximated error of time derivative term which is orthogonal to the magnetization. The error estimates for linearly implicit schemes, based on either backward Euler or Crank-Nicolson method, combined with finite element/finite difference spatial discretization, have been obtained in \cite{An2021,An2016OptimalEE, gao2014optimal}. The backward differentiation formula (BDF)-based linearly implicit methods have been analyzed in~\cite{Lubich2021, jingrun2019analysis}, and a second-order accuracy have been rigorously proved under the same condition that temporal step-size proportional to the spatial mesh-size in both space and time. 

Meanwhile, it is noticed that, all these existing numerical methods lead to an asymmetric linear system of equations with the coefficients dependent of the updated magnetization. An efficient numerical solver for such an asymmetric linear system is highly non-trivial, which usually results in a very expensive computation cost in the three-dimensional simulation. Therefore, a numerical scheme only involved with a constant coefficient linear system, so that the coefficients are independent of the updated magnetization, is highly desirable. 
In fact, such a linear numerical scheme has been proposed and studied in a recent work~\cite{Xie2021BDF2Linear}. In more details, the second-order BDF stencil is used in the temporal discretization, the perfect Laplace term (in the harmonic mapping part) is treated implicitly, while the nonlinear terms are approximated by explicit extrapolation formulas. After an intermediate magnetization vector is obtained by this linear algorithm, a projection of magnetization onto the unit sphere is applied, to satisfy the non-convex constraint of unit length. Of course, this  numerical approach leads to a linear system with constant coefficients independent the updated magnetization at each time step. Because of this subtle fact, the linear numerical scheme has demonstrated great advantages in the simulation of ferromagnetic materials for large damping parameters \cite{Xie2021BDF2Linear} . Furthermore, extensive simulation experiments have indicated that the proposed linear numerical method preserves better stability property as the damping parameter takes large values, in comparison with all the existing works~\cite{Lubich2021, alouges2006convergence, An2016OptimalEE, Boscarino2016High, jingrun2019analysis}, etc.  %{\red Because of the special treatment of the time derivative approximated by the projected value of magnetization, such a method is much more stable so that it has been applied to the numerical simulations of ferromagnetic materials, see \cite{Xie2021BDF2Linear} for more numerical examples. } 
%Thus, these methods are the most easy-to-implement and efficient methods due to only the scalar heat equation implicitly with a second-order accuracy in both time and space to be solved. The unconditionally unique solvability of these schemes are trivial. %thanks to all the positive eigenvalues for the symmetric system. 

On the other hand, a theoretical analysis of the proposed linear numerical scheme has not been available, in spite of its extensive advantages in the practical computations for large damping parameters. The key theoretical difficulty is associated with the fact that, a fully explicit treatment of the nonlinear gyromagnetic term (by an extrapolation formula) breaks its (energetic) conservative feature at the numerical level, so that a direct control of this nonlinear term becomes a very challenging issue. The only hopeful approach is to control this term by the linear diffusion term in the harmonic mapping part, while the fact that the nonlinear gyromagnetic term and the linear diffusion term are updated by different temporal discretization makes this estimate highly non-trivial. In this article, we provide the convergence analysis and the optimal rate error estimate for the proposed linear numerical scheme, in the discrete $\ell^{\infty}(0,T; \ell^2)  \cap \ell^2(0,T; H_h^1)$ norm, if the damping parameter is greater than 3. To overcome the above-mentioned difficulties, we build the stability estimate of the projection step, which will play a crucial role in the rigorous error estimate for the original error function. In particular, a standard $\ell^2$ stability estimate for the projection step is not sufficient to recover the convergence analysis, and an $H_h^1$ stability estimate turns out to be necessary at the projection step, which comes from the technique of controlling the nonlinear gyromagnetic term by the linear diffusion term. In more details, the \textit{a-priori} $W_h^{1,\infty}$ estimate for the numerical solution and \textit{a-priori} $H_h^1$ estimate for the intermediate numerical error function at the previous time step is needed in the error analysis. Meanwhile, the \textit{a-priori} $H_h^1$ estimate for the numerical error for the magnetization vector can be controlled by a growth factor $1+\delta$ acting on the $H_h^1$ estimate of the intermediate magnetization error function, with $\delta$ being arbitrary positive number. Such a $W_h^{1,\infty}$ bound for numerical solution and $H_h^1$ estimate for the intermediate numerical error function can be recovered at the next time step, % combined with the inner product $\tilde{\e}_h$ and the equation of error function, 
with the help of the inverse inequality and a mild temporal constraint and large damping parameter (larger than three). In turn, an estimate for numerical error function becomes a straightforward consequence of an application of discrete Gronwall inequality, combined with the fine estimate of a growth factor $1+\delta$ acting on the $H_h^1$ estimate of the intermediate error function. 

The rest of this paper is organized as follows. In Section~\ref{sec: numerical scheme}, we review the fully discrete numerical schemes and state the main theoretical result of convergence. The detailed proof is provided in Section~\ref{sec: convergence}. 
%Numerical results are presented in \S\ref{sec:experiments}, including both the 1D and 3D examples to confirm the accuracy from theoretical part, and stability of micromagnetics simulations of the proposed method with large damping parameters. 
Finally, some concluding remarks are made in Section~\ref{sec:conclusions}.

\section{The mathematical model and the numerical scheme}
\label{sec: numerical scheme}

\subsection{The Landau-Lifshitz equation}
The LL equation is formulated as  
\begin{equation}\label{c1}
\begin{cases}
\partial_t{\m}(\x,t)=-{\m}\times\Delta{\m} + \alpha \Delta \m + \alpha | \nabla \m |^2 \m,&\x\in\Omega,\,t>0,\\
\partial_{\bf n}\m(\x,t)|_{\partial\Omega}=0,&\x\in\partial\Omega,\,t\ge0,\\
{\m}(\x,0)=\m_0(\x),&\x\in\Omega,
\end{cases}
\end{equation}
%with homogeneous Neumann boundary condition.
% \begin{equation}\label{boundary}
% \frac{\partial{\m}}{\partial {\bm \nu}}\Big|_{\Gamma}=0,
% \end{equation}
% where $\Gamma = \partial \Omega$ and ${\bm \nu}$ is the unit outward normal vector along $\Gamma$. 
where $\x$ and $t$ are the variables of space and time, respectively, $\Omega\subset\mathbb{R}^d$ ($d=1,2,3$, with $d$ being the spatial dimension) is a bounded domain and  ${\bf n}$ is the unit outward normal vector along $\partial\Omega$, 
 ${\m}:=\m(\x,t)=(m_1,m_2,m_3)^T\,:\,\Omega\subset\mathbb{R}^d\to S^2$ represents the magnetization vector field with $|{\m}|=1,\;\forall \x\in\Omega$, and $\alpha>0$ is the damping parameter. The notations $\partial_t$, $\nabla$, and $\Delta$ represent the temporal derivative, the gradient and the Laplacian, respectively. The homogeneous Neumann boundary condition is considered. The first term on the right hand side of \cref{c1} is the gyromagnetic term, and the remaining term related to $\alpha$ is the damping term. In comparison with the ferromagnetic model~\cite{Landau1935On}, \cref{c1} only includes the exchange term which poses the main difficulties in numerical analysis, as done in the literature \cite{bartels2006convergence, weinan2001numerical, gao2014optimal}. %Application of the scheme \cref{eqn:bdf2} to the original LL equation under external fields will be presented in another publication \cite{Xie2018}. 
To simplify the presentation, we set $\Omega=[0,1]^d$ and consider the 3D case in this paper, while the results hold for the 1D and 2D models.
%, in which $d$ is the dimension. %when $d=1$ and $\Omega=[0, 1]\times[0, 1]\times[0, 1]$ when $d=3$. 

%   In the case of a large damping parameter $\alpha$, so that the term of $-\alpha{\m}\times({\m}\times\Delta{\m})$ is more dominant, we make the following observation: 
% \begin{align*} 
% - {\m}\times({\m}\times\Delta{\m}) = \Delta \m + | \nabla \m |^2 \m ,  \quad 
% \mbox{since $| \m | \equiv 1$} . 
% \end{align*}
% In turn, the LL equation~\eqref{c1} could be rewritten as 
% \begin{align} \label{equation-LL-alt}
% {\m}_t=-{\m}\times\Delta{\m} + \alpha \Delta \m + \alpha | \nabla \m |^2 \m . 
% \end{align}
% Again, the homogeneous Neumann boundary condition~\eqref{boundary} is imposed. 

% Taking $\alpha=1$, the equation \cref{c1} can be treated as the combination of heat flow of harmonic maps \cite{struwe1996Ep} to $\mathbb{S}^2$ and Hamiltonian (or symplectic) flow of harmonic maps (Schr\"odinger map) to $\mathbb{S}^2$. Here, the dissipation-dominated system is considered.

\subsection{Finite difference discretization and the numerical method} \label{discretisations}
%The finite difference method is adopted to approximate \cref{c1}. 
We set the temporal step-size as $k>0$, so that the time step instant becomes $t^n=nk$ ($n\leq \left\lfloor\frac{T}{k}\right\rfloor$, with $T$ being the final time, $\left\lfloor\cdot\right\rfloor$ being the floor operator). The spatial mesh-size is given by $h_x=\frac{1}{N_x}$, $h_y=\frac{1}{N_y}$, $h_z=\frac{1}{N_z}$, with $N_x$, $N_y$ and $N_z$ being the number of grid points of uniform partition along $x$, $y$ and $z$ directions, respectively. We use the  half grid points $({x}_{i-\frac{1}{2}},{y}_{j-\frac{1}{2}},{z}_{\ell-\frac{1}{2}})$ (also written as $(\hat{x}_i, \hat{y}_j, \hat{z}_\ell)$),  with $x_{i-\frac{1}{2}}=(i-\frac{1}{2})h_x$, $y_{j-\frac{1}{2}}=(j-\frac{1}{2})h_y$ and $z_{\ell-\frac{1}{2}}=(\ell-\frac{1}{2})h_z$ ( $ i=0,1,\cdots, N_x+1$; $j=0,1,\cdots,N_y+1$; $\ell=0,1,\cdots, N_z+1$). The numerical domain becomes $\Omega_h=\{(\hat{x}_i, \hat{y}_j, \hat{z}_\ell)| i=0,1,\cdots, N_x+1; j=0,1,\cdots, N_y+1; \ell=0,1,\cdots, N_z+1\}$ and the interior domain is $\Omega_h^0=\{(\hat{x}_i, \hat{y}_j, \hat{z}_\ell)| i=1,\cdots, N_x; j=1,\cdots, N_y; \ell=1,\cdots, N_z\}$, and $\Omega_h/\Omega_h^0$ is the set of boundary (ghost) points. 
We introduce the notation of the discrete vector grid function $\m_h(\x)\in\mathbb{R}^3$  defined for $\x\in\Omega_h$ with $\m_{i,j,\ell}=\m_h(\hat{x}_i,,\hat{y}_j,\hat{z}_\ell)$ (similar notations for the scalar functions ), and the discrete homogeneous Neumann boundary condition reads for $i_x=0,N_x$, $j_y=0,N_y$, $\ell_z=0,N_z$ and $ 0 \le i \le N_x+1, 0 \le j \le N_y+1, 0\le \ell \le N_z+1$,
 \begin{equation}\label{BC-1}
\m_{i_x,j,\ell}=\m_{i_x+1,j,\ell},\quad\m_{i,j_y,\ell}=\m_{i,j_y+1,\ell},\quad \m_{i,j,\ell_z}=\m_{i,j,\ell_z+1}.
\end{equation}
Let $X=\{f_h(\x)\in\mathbb{R},\x\in\Omega_h,\, f_h\, \mbox{satisfies boundary condition}~\eqref{BC-1} \}$ be the scalar function space and $\X=\{\m_h(\x)\in\mathbb{R}^3,\x\in\Omega_h,\, \m_h\, \mbox{satisfies boundary condition}~\eqref{BC-1} \}$ be the vector-valued function space. The corresponding continuous version is denoted by $X_e, \X_e$. The standard second-order centered difference approximation for the Laplace operator results in
\begin{equation*}
\Delta_h\m_{i,j,\ell}=\delta_x^2\m_{i,j,\ell}+\delta_y^2\m_{i,j,\ell}+\delta_z^2\m_{i,j,\ell},\quad \delta_x^2 \m_{i,j,\ell} =\frac{\m_{i+1,j,\ell}-2\m_{i,j,\ell}+\m_{i-1,j,\ell}}{h_x^2},
\end{equation*}
where  $\delta_y^2$, $\delta_z^2$ for the approximation of $\partial_{yy}$, $\partial_{zz}$ could be  similarly defined. The forward finite  difference operators $D_x,D_y$ and $D_z$ are defined for $f_h\in X$: 
\begin{equation*}
D_xf_{i,j,\ell}=\frac{f_{i+1,j,\ell}-f_{i,j,\ell}}{h_x},\, D_yf_{i,j,\ell}=\frac{f_{i,j+1,\ell}-f_{i,j,\ell}}{h_y},\,
D_zf_{i,j,\ell}=\frac{f_{i,j,\ell+1}-f_{i,j,\ell}}{h_z} . 
\end{equation*}
These finite difference operators could be applied to the scalar or vector grid functions in the same way. The discrete gradient operator (forward)  $\nabla_h \m_h$ with $\m_h=(u_h, v_h, w_h)^T\in \X$ reads as 
\begin{align*}
\nabla_h\m_{i,j,\ell} = \begin{bmatrix}
D_xu_{i,j,\ell}&D_xv_{i,j,\ell}&D_xw_{i,j,\ell}\\
D_yu_{i,j,\ell}&D_yv_{i,j,\ell}&D_yw_{i,j,\ell}\\
D_zu_{i,j,\ell}&D_zv_{i,j,\ell}&D_zw_{i,j,\ell}
\end{bmatrix}.
\end{align*}

A semi-implicit numerical scheme has been proposed in \cite{Xie2018}, and used in the numerical simulation for small damping parameter models. In more details, semi-implicit approximations are applied to the two nonlinear terms, namely $\m \times \Delta \m$ and $\m \times (\m \times \Delta \m)$, in which $\Delta \m$ is treated implicitly, while the coefficient variables are explicitly updated via an extrapolation formula. The theoretical convergence analysis has been established in a more recent work~\cite{jingrun2019analysis}. However, this numerical scheme involves a large linear system with time-dependent coefficients, related to the updated magnetization at each time step, and the symmetry is not available in the linear system, due to the nonlinear structure. To overcome this subtle difficulty, which leads to significant computational costs (especially in the 3D case), we make use of the alternate PDE formulation~\eqref{c1}, and treat the linear diffusion term $\alpha \Delta \m$ implicitly, while  the two nonlinear terms, namely $- \m \times \Delta \m$ and $\alpha | \nabla \m |^2 \m$, are discretized in a fully explicit way. Subsequently, a point-wise projection is applied to the intermediate field, so that the numerical solution of $\m$ has a unit length at the point-wise level. %Such an approach would greatly improve the computational efficiency, since only a Poisson equation needs to be solved. 
In more details, the following numerical scheme has been studied, namely {\bf Algorithm 2.1}: 
for given $\m_h,\tilde{\m}_h^n,\m_h^{n+1},\tilde{\m}_{h}^{n+1}\in \X$, find $\tilde{\m}_h^{n+2},\m_h^{n+2}\in \X$ by solving
% \begin{align}
% \hat{\m}_h^{n+2} &= 2 \m_h^{n+1} - \m_h^n , \, \, \, 
% \label{cc} 
% \end{align}
\begin{align*}
\frac{\frac32 \tilde{\m}_h^{n+2}(\x) - 2 \tilde{\m}_h^{n+1}(\x) + \frac12 \tilde{\m}_h^n(\x)}{\dtk}
&=  - \hat{\m}_h^{n+2} \times \Delta_h \hat{\tilde{\m}}_h^{n+2} \\
& + \alpha \Delta_h \tilde{\m}_h^{n+2}  
+ \alpha | {\mathcal A}_h \nabla_h \hat{\m}_h^{n+2} |^2 \hat{\m}_h^{n+2} ,\quad \x\in\Omega_h^0,
\end{align*}
followed by the point-wise projection $\m_h^{n+2} = \frac{\tilde{\m}_h^{n+2}}{ |\tilde{\m}_h^{n+2}| }$,
where the extrapolation formula is defined by $\hat{\m}_h^{n+2} = 2 \m_h^{n+1} - \m_h^n , \, \, \, 
\hat{\tilde{\m}}_h^{n+2} = 2 \tilde{\m}_h^{n+1} - \tilde{\m}_h^n$, and ${\mathcal A}_h\nabla_h$ (second approximation to the gradient operator) is an average gradient operator defined for the gird function $\m_h=(u_h, v_h, w_h)^T\in \X$ as $\mathcal{A}_h\nabla_h\m_h=\nabla_h\mathcal{A}_h\m_h$ and $\mathcal{A}_h\m_h=(\mathcal{A}_xu_h,\mathcal{A}_yv_h,\mathcal{A}_zw_h)$: 
\begin{equation*}
\mathcal{A}_xu_{i,j,\ell}=\frac{u_{i,j,\ell}+u_{i-1,j,\ell}}{2},\,
\mathcal{A}_yv_{i,j,\ell}=\frac{v_{i,j,\ell}+v_{i,j-1,\ell}}{2},\,
\mathcal{A}_zw_{i,j,\ell}=\frac{w_{i,j,\ell}+w_{i,j,\ell-1}}{2}.
\end{equation*} 
%
% \begin{align}
% \m_h^{n+2} &= \frac{\tilde{\m}_h^{n+2}}{ |\tilde{\m}_h^{n+2}| } . \label{scheme-1-2}
% \end{align}
In addition, a modified version is proposed by \cite{Xie2021BDF2Linear}, namely {\bf Algorithm 2.2}: for given $\m_h,\m_h^{n+1}\in \X$, denote $\hat{\m}_h^{n+2} = 2 \m_h^{n+1} - \m_h^n$, and find $\m_h^{n+2},\tilde{\m}_h^{n+2}\in \X$ by solving
% \begin{align*}
% \hat{\m}_h^{n+2} &= 2 \m_h^{n+1} - \m_h^n , \, \, \, 
% \hat{\tilde{\m}}_h^{n+2} = 2 \tilde{\m}_h^{n+1} - \tilde{\m}_h^n , 
% \end{align*}

\begin{align}
%\hat{\m}_h^{n+2} &= 2 \m_h^{n+1} - \m_h^n , \, \, \, 
%\label{cc} \\
\frac{\frac32 \tilde{\m}_h^{n+2}(\x) - 2 \m_h^{n+1}(\x) + \frac12 \m_h^n(\x)}{\dtk}
&=  - \hat{\m}_h^{n+2} \times \Delta_h \hat{\m}_h^{n+2} + \alpha \Delta_h \tilde{\m}_h^{n+2}    \nonumber\\
\label{scheme-1-1}&\quad
 + \alpha | {\mathcal A}_h \nabla_h \hat{\m}_h^{n+2} |^2 \hat{\m}_h^{n+2} , \qquad\x\in\Omega_h^0,\\
 \m_h^{n+2} &= \frac{\tilde{\m}_h^{n+2}}{ |\tilde{\m}_h^{n+2}| } . \label{scheme-1-2}
\end{align}

\begin{remark}
The initial data is given by $\m_h^0=\mathcal{P}_h\m_0\in X$, where ${\mathcal P}_h:[C(\Omega)]^3\to X$ is the point-wise interpolation as
\begin{equation} 
  \mathcal{P}_h\m_0(\x)=  \m_0 (\x),\quad \x\in\Omega_h^0 . \label{initial data-1} 
\end{equation} 
In addition, the first-order semi-implicit projection scheme could be applied to obtain $\m_h^1$, so that the two-step numerical method could be jump started. Such a single-step first order algorithm will preserve the overall second-order accuracy in time; see the detailed analysis in the related works~\cite{guo16, guo2021} for Cahn-Hilliard equation, in which a single step, first order semi-implicit algorithm creates a second order accurate numerical solution in the first step.
\end{remark}

\begin{remark}
The primary difference between \textit{Algorithm 2.1} and {\bf Algorithm 2.2} is focused on the temporal-derivative approximation, where {\bf Algorithm 2.1} uses the intermediate approximate  magnetization, while \textit{Algorithm 2.2} takes the previous projected values. %The approximate solution $\tilde{\m}_h^n$ in the {\bf Algorithm 2.1} at previous step is unstable which beats the linear stability brought by the large damping term. 
Extensive numerical experiments have demonstrated that, {\bf Algorithm 2.2} provides a much better stability than {\bf Algorithm 2.1} in the simulation of the realistic ferromagnetic material with large damping parameters, as reported in~\cite{Xie2021BDF2Linear}. In this article, we present a theoretical justification of the stability and convergence analysis for {\bf Algorithm 2.2}.  %Theoretically, the stability of projection step for {\bf Algorithm 2.2} has been illustrated in the section \ref{preliminary}.
\end{remark}

%\begin{remark}
%The {\bf Algorithm 2.2} has been applied to the numerical simulations of ferromagnetic thin-film materials, see \cite{Xie2021BDF2Linear}. The proposed {\bf Algorithm 2.2} are the numerically second-order accurate, efficient, and unconditionally stable.  
%\end{remark}

\subsection{Main theoretical results}

For simplicity of presentation, we make an assumption that $N_x = N_y =N_z=N$ (with $d=3$) so that $h_x = h_y = h_z =h$. An extension to the general case is straightforward. %In the finite difference approximation, all the numerical values are assigned on the numerical grid points. As a result, the discrete grid functions (with notations $\f_h$, $\g_h$), which are only defined over the corresponding numerical grid points, are introduced. 
For the gird function $\f_h,\g_h\in\X$, the discrete $\ell^2$ inner product  $\langle\cdot\rangle$, discrete $\| \cdot \|_2$ norm  and discrete $\|\cdot\|_\infty$ norm are defined as 
 \begin{equation*} 
 \langle {\bm f}_h,{\bm g}_h \rangle = h^3\sum_{\x\in \Omega_h^0} \f_{h}(\x)\cdot\g_h(\x),
 \, \| {\bm f}_h \|_2 =  \sqrt{\langle {\bm f}_h,{\bm f}_h \rangle },\, \|\f_h\|_\infty=\max_{\x\in\Omega_h^0}|\f_h(\x)|.
\end{equation*}
In addition, the discrete $H_h^1$-norm is given by $\| \f_h \|_{H_h^1}^2 :=\|\f_h\|_2^2+\|\nabla_h \f_h\|_2^2$, and the discrete $\ell^p$ ($1<p<\infty$) norm is defined as $\| \f_h \|_{p}^p=h^3\sum_{\x\in \Omega_h^0}|\f_h(\x)|^p$. Such norms induce the discrete spaces
\begin{align*}
	\ell^p&=\{\f_h\in \X\;|\;\|\f_h\|_p<\infty\},\quad 0<p\le \infty\\
	H_h^1&=\{\f_h\in \X\;|\;\|\f_h\|_{H_h^1}<\infty\},\\
	W^{1,\infty}_h&=\{\f_h\in \X\;|\;\|\f_h\|_{\infty}+\|\nabla_h\f_h\|_{\infty}<\infty\},\\
	W^{1,4}_h&=\{\f_h\in \X\;|\;\|\f_h\|_{4}+\|\nabla_h\f_h\|_{4}<\infty\},\\
	\ell^{\infty}(0,T;\ell^2)&=\{\f_h^n\in \X\;|\;\max_n\|\f^n_h\|_2<\infty,\;n\in \left[0,\left\lfloor\frac{T}{k}\right\rfloor\right]\},\\
	\ell^{2}(0,T;H_h^1)&=\left\{\f_h^n\in \X\;\left|\right.\;\left(\sum_{n=0}^N\|\f^n_h\|_2^2\right)^{\frac12}<\infty,\;n\in \left[0,\left\lfloor\frac{T}{k}\right\rfloor\right]\right\}.
\end{align*}
Meanwhile, we define the continuous spaces for the function $\f(\x,t)=(f_1,f_2,f_3)$ as below, 
\begin{align*}
%[C^0(\Omega)]^3&=\{\f=(f_x,f_y,f_z)\in \X_e\;|\;f_x,f_y,f_y\},\\
C^3([0,T];[C^0(\Omega)]^3)&:=\{\f(\x,t)\in \X_e\;|\; \frac{d^3}{dt^3} f_i \in C^0([0,T]),f_i\in C^0(\Omega),\;i=1,2,3\},\\
C^2([0,T];[C^2(\bar{\Omega})]^3)&:=\{\f(\x,t)\in \X_e\;|\;\frac{d^2}{dt^2} f_i \in C^0([0,T]),\frac{d^2}{d\x^2}f_i\in C^0(\bar{\Omega}),\;i=1,2,3\},\\
L^{\infty}([0,T];[C^4(\bar{\Omega})]^3)&:=\{\f(\x,t)\in \X_e\;|\;\textrm{ess sup}_{t\in[0,T]} \frac{d^4}{d\x^4}f_i\in C^0(\bar{\Omega}),\;i=1,2,3\},
\end{align*}
where $C^0(\Omega)$ is the space of continuous function.

The unique solvability  of scheme~\cref{scheme-1-1}-\cref{scheme-1-2}  follows from the equivalent form of \cref{scheme-1-1}:  
\begin{equation*} 
 ( \frac{3}{2 \dtk} I - \alpha \Delta_h ) \tilde{\m}_h^{n+2}(\x) 
 = \q_h^{n+2} (\x) ,\quad \x\in\Omega_h^0,
\end{equation*} 
where $\tilde{\m}_h^{n+2}\in\X$ and $\q_h^{n+2} := \frac{2 \m_h^{n+1} - \frac12 \m_h^n}{\dtk}
  - \hat{\m}_h^{n+2} \times \Delta_h \hat{\m}_h^{n+2}  
 + \alpha | {\mathcal A}_h \nabla_h \hat{\m}_h^{n+1} |^2 \hat{\m}^{n+2} $.
The left hand side corresponds to a positive-definite symmetric matrix,  and the unique solvability of the proposed scheme~\cref{scheme-1-1}-\cref{scheme-1-2}, as well as the \textit{Algorithm 2.1}, is obvious. With the fast discrete  Cosine transform, the  above linear system can be very efficiently solved. 

The theoretical results concerning the convergence analysis is stated below.
\begin{thm}\label{cccthm2} Assume that the exact solution of \cref{c1} has the regularity $\m_e \in C^3 ([0,T]; [C^0(\bar{}\Omega)]^3) \cap C^2([0,T]; [C^2(\bar{\Omega})]^3) \cap L^{\infty}([0,T]; [C^4(\bar{\Omega})]^3)$. Denote ${\m}_h^n$ ($n\ge0$) as the numerical solution obtained from~\cref{scheme-1-1}-\cref{scheme-1-2} with the initial  error satisfying $\|\mathcal{P}_h \m_e (\cdot,t_p) - \m_h^p \|_2 +\|\nabla_h ( \mathcal{P}_{h} \m_e (\cdot,t_p) - \m_h^p ) \|_2 = \mathcal{O} (\dtk^2 + h^2),\,p=0,1$. In addition, we assume  the technical assumption $\alpha > 3$, and $\mathcal{C}_1h\leq\dtk\leq \mathcal{C}_2h$, with $\mathcal{C}_1, \mathcal{C}_2$ being the positive constants. Then the following convergence result holds for $2\leq n\leq \left\lfloor\frac{T}{k}\right\rfloor$ as $h,\dtk\to0^+$:	\begin{align} \label{convergence-0} 
	\| \mathcal{P}_h \m_e (\cdot,t_n) - \m_h^n \|_{2} 
	+ \Big( \dtk \sum_{p=1}^n \|\nabla_h ( \mathcal{P}_{h} \m_e (\cdot,t_p)- \tilde{\m}_h^p ) \|_{2}^2 \Big)^\frac12  
	&\leq \mathcal{C}( \dtk^2+h^2) ,% \quad \forall n \ge 2 ,
	\end{align}	
	in which the constant $\mathcal{C}>0$ is independent of $\dtk$ and $h$.
\end{thm}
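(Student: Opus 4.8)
The plan is to prove \Cref{cccthm2} by combining (i) a consistency analysis, (ii) a discrete energy estimate for the BDF2 error equation in which the explicitly–treated gyromagnetic term is absorbed by the implicit diffusion, (iii) a dedicated $H_h^1$ stability estimate for the nonlinear projection step \eqref{scheme-1-2}, and (iv) an induction on a set of \emph{a priori} bounds closed by inverse inequalities and the mesh ratio $\mathcal{C}_1 h\le \dtk\le \mathcal{C}_2 h$. First I would substitute the grid restriction $\mathcal{P}_h\m_e(\cdot,t_n)$ into \cref{scheme-1-1}--\cref{scheme-1-2}. Since $|\m_e|\equiv 1$, the interpolant $\mathcal{P}_h\m_e$ is exactly a unit vector at every interior node, so the projection step is consistent up to quantities quadratically small in the unit-length defect; a Taylor expansion then gives a local truncation error $\tau^{n+2}$ of size $\mathcal{O}(\dtk^2+h^2)$ in $\|\cdot\|_2$, with the BDF2 stencil and the extrapolation $\hat{\m}=2\m^{n+1}-\m^n$ contributing $\mathcal{O}(\dtk^2)$ and the centered Laplacian and averaged gradient $\mathcal{A}_h\nabla_h$ contributing $\mathcal{O}(h^2)$, using the assumed regularity of $\m_e$.

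\textbf{Error equations and induction.} Set $\e_h^j=\mathcal{P}_h\m_e(\cdot,t_j)-\m_h^j$ and $\tilde{\e}_h^j=\mathcal{P}_h\m_e(\cdot,t_j)-\tilde{\m}_h^j$. Subtracting the scheme from the consistency relation yields
\[
\frac{\frac32\tilde{\e}_h^{n+2}-2\e_h^{n+1}+\frac12\e_h^n}{\dtk}=\alpha\Delta_h\tilde{\e}_h^{n+2}+\mathcal{N}_h^{n+2}+\tau^{n+2},
\]
where $\mathcal{N}_h^{n+2}$ gathers the differences of the two nonlinear terms. Because the BDF2 stencil mixes the intermediate error $\tilde{\e}_h^{n+2}$ with the \emph{projected} errors $\e_h^{n+1},\e_h^n$, I would rewrite $\frac32\tilde{\e}_h^{n+2}-2\e_h^{n+1}+\frac12\e_h^n=(\frac32\tilde{\e}_h^{n+2}-2\tilde{\e}_h^{n+1}+\frac12\tilde{\e}_h^n)+2(\tilde{\e}_h^{n+1}-\e_h^{n+1})-\frac12(\tilde{\e}_h^n-\e_h^n)$ and treat the corrections $\tilde{\e}_h^j-\e_h^j=\m_h^j-\tilde{\m}_h^j$ as higher order (they coincide with the normal component of $\tilde{\e}_h^j$ up to quadratic terms). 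The induction then runs: assuming, at levels $\le n+1$, the \emph{a priori} bounds $\|\m_h^j\|_{W_h^{1,\infty}}\le C^*$ and optimal-order control of $\|\tilde{\e}_h^j\|_{H_h^1}$, I would prove \eqref{convergence-0} up to level $n+2$ and then recover these two bounds at level $n+2$.

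\textbf{Energy estimate.} Taking the discrete inner product of the error equation with $\tilde{\e}_h^{n+2}$, the implicit diffusion produces the dissipation $\alpha\|\nabla_h\tilde{\e}_h^{n+2}\|_2^2$ and the BDF2 part a telescoping $G$-norm. The contributions of $\alpha|\mathcal{A}_h\nabla_h\hat{\m}|^2\hat{\m}$ are harmless, since $\mathcal{A}_h\nabla_h$ is bounded and all factors are controlled by the $W_h^{1,\infty}$ a priori bound. The delicate term is the gyromagnetic one: the difference $-\hat{\m}_h^{n+2}\times\Delta_h\hat{\m}_h^{n+2}+\hat{\m}_{e,h}^{n+2}\times\Delta_h\hat{\m}_{e,h}^{n+2}$ equals $\hat{\e}_m\times\Delta_h\hat{\m}_{e,h}^{n+2}+\hat{\m}_h^{n+2}\times\Delta_h\hat{\e}_m$ with $\hat{\e}_m=2\e_h^{n+1}-\e_h^n$; the first piece is lower order (bounded $\Delta_h\hat{\m}_{e,h}$), while the second involves $\Delta_h$ of the projected error at previous levels, updated by a different temporal discretization than the implicit diffusion. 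I would move the cross product and use summation by parts, $\langle\hat{\m}_h^{n+2}\times\Delta_h\hat{\e}_m,\tilde{\e}_h^{n+2}\rangle=-\langle\nabla_h\hat{\e}_m,\nabla_h(\tilde{\e}_h^{n+2}\times\hat{\m}_h^{n+2})\rangle$, expand by the discrete product rule, and bound the leading piece by $\|\hat{\m}_h^{n+2}\|_\infty\,\|\nabla_h\hat{\e}_m\|_2\,\|\nabla_h\tilde{\e}_h^{n+2}\|_2\le 3\,\|\nabla_h\hat{\e}_m\|_2\,\|\nabla_h\tilde{\e}_h^{n+2}\|_2$, using that each $\m_h^j$ is a unit vector so $\|\hat{\m}_h^{n+2}\|_\infty\le 3$; the remaining pieces carry $\|\nabla_h\hat{\m}_h^{n+2}\|_\infty$, controlled by the $W_h^{1,\infty}$ bound. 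The precise tracking of constants — from the extrapolation $\hat{\m}=2\m^{n+1}-\m^n$, the BDF2 positivity, and the projection growth factor — is where $\alpha>3$ enters, the value $3$ reflecting the $\ell^\infty$ bound $\|\hat{\m}_h^{n+2}\|_\infty\le 3$ of the extrapolated magnetization: the dissipation $\alpha\|\nabla_h\tilde{\e}_h^{n+2}\|_2^2$ must dominate this term up to an arbitrarily small loss $\delta$ coming from the projection step. A discrete Gronwall inequality then closes the $\ell^\infty(0,T;\ell^2)\cap\ell^2(0,T;H_h^1)$ estimate.

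\textbf{Projection-step stability and closing (the crux).} The main obstacle is the $H_h^1$ stability estimate for \eqref{scheme-1-2}: $\|\nabla_h\e_h^{n+2}\|_2\le(1+\delta)\|\nabla_h\tilde{\e}_h^{n+2}\|_2+C_\delta(\dtk^2+h^2+\text{h.o.t.})$ for arbitrarily small $\delta>0$. Since $|\m_e|\equiv 1$, I would write $\tilde{\m}_h^{n+2}=\mathcal{P}_h\m_e(\cdot,t_{n+2})-\tilde{\e}_h^{n+2}$ and expand $\e_h^{n+2}=\mathcal{P}_h\m_e(\cdot,t_{n+2})-\tilde{\m}_h^{n+2}/|\tilde{\m}_h^{n+2}|$, whose leading part is the projection of $\tilde{\e}_h^{n+2}$ onto the tangent plane of $S^2$; the $\ell^2$ bound $\|\e_h^{n+2}\|_2\le C\|\tilde{\e}_h^{n+2}\|_2$ is an easy Lipschitz estimate but, as the introduction stresses, not enough. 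For the gradient one must differentiate the discrete quotient, the key point-wise inequality being $|D_x|\tilde{\m}_h||\le|D_x\tilde{\m}_h|$ (and likewise in $y,z$), from $\big||\a|-|\b|\big|\le|\a-\b|$; careful bookkeeping of the discrete product/quotient rule — using $|\tilde{\m}_h^{n+2}|\ge\frac12$, the $W_h^{1,\infty}$ bound on $\m_h$, and the smallness of $\|\tilde{\e}_h^{n+2}\|_\infty$ from an inverse inequality — yields the factor $1+\delta$ with $\delta\to 0$. Finally, combining the energy estimate with this projection stability gives \eqref{convergence-0} at level $n+2$, and an inverse inequality together with $\mathcal{C}_1 h\le\dtk\le\mathcal{C}_2 h$ upgrades it to the $W_h^{1,\infty}$ bound on $\m_h^{n+2}$ and the $H_h^1$ bound on $\tilde{\e}_h^{n+2}$, closing the induction. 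I expect the genuinely hard part to be exactly this $H_h^1$ projection-step estimate with the sharp growth factor, since it is what makes the whole loop — control the explicit gyromagnetic term by the implicit diffusion $\Rightarrow$ need $H_h^1$ control of $\hat{\e}_m$ at previous steps $\Rightarrow$ need an essentially loss-free $H_h^1$ stability of the projection $\Rightarrow$ need $\alpha>3$ — actually consistent.
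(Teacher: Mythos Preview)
Your overall architecture is right and matches the paper's: consistency, inner product of the error equation with $\tilde{\e}_h^{n+2}$, summation by parts on the gyromagnetic term, an $H_h^1$ projection stability with growth factor $1+\delta$, and an induction closed via inverse inequalities under $\dtk\sim h$. There is, however, a genuine gap in the constant tracking that determines the threshold $\alpha>3$.

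\medskip
\textbf{The main gap: the bound on $\|\hat{\m}_h^{n+2}\|_\infty$.} You write that the leading piece is bounded by $3\,\|\nabla_h\hat{\e}_m\|_2\,\|\nabla_h\tilde{\e}_h^{n+2}\|_2$ ``using that each $\m_h^j$ is a unit vector so $\|\hat{\m}_h^{n+2}\|_\infty\le 3$,'' and you attribute the condition $\alpha>3$ to this bound. That is not how the threshold arises, and with the bound $3$ your argument cannot reach $\alpha>3$. Following your own Young/telescoping split (or the paper's), the net requirement is essentially $\alpha>\tfrac{3}{2}\|\hat{\m}_h^{n+2}\|_\infty^2+\tfrac{3}{2}$; with $\|\hat{\m}_h^{n+2}\|_\infty\le 3$ this forces $\alpha>15$, not $\alpha>3$. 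The paper's key observation (equations (3.16)--(3.19)) is much sharper: using $C^3$-in-time regularity, $2\underline{\m}_h^{n+1}-\underline{\m}_h^n=\underline{\m}_h^{n+2}+\mathcal{O}(\dtk^2)$ has $\|\cdot\|_\infty$ bounded by $1+\mathcal{C}\dtk^2$, and the a~priori smallness $\|\e_h^j\|_\infty\le \dtk^{3/8}$ then gives $\|\hat{\m}_h^{n+2}\|_\infty\le \alpha_1$ with $\alpha_1^2=1+\tfrac{1}{6}(\alpha-3)$, i.e.\ essentially $1$. Plugging $\alpha_1\approx 1$ into $\alpha>\tfrac{3}{2}\alpha_1^2+\tfrac{3}{2}$ is exactly what produces $\alpha>3$. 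Without this refinement your loop does not close at the claimed threshold.

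\medskip
\textbf{A secondary gap: the BDF2 correction terms.} Your rewriting
\[
\tfrac32\tilde{\e}_h^{n+2}-2\e_h^{n+1}+\tfrac12\e_h^n
=(\tfrac32\tilde{\e}_h^{n+2}-2\tilde{\e}_h^{n+1}+\tfrac12\tilde{\e}_h^n)
+2(\tilde{\e}_h^{n+1}-\e_h^{n+1})-\tfrac12(\tilde{\e}_h^n-\e_h^n)
\]
and the claim to ``treat the corrections $\tilde{\e}_h^j-\e_h^j$ as higher order'' is not justified as stated. Pointwise, $\tilde{\e}_h^j-\e_h^j=\m_h^j(1-|\tilde{\m}_h^j|)$ has size $\mathcal{O}(|\tilde{\e}_h^j|)$, not $\mathcal{O}(|\tilde{\e}_h^j|^2)$, because $1-|\tilde{\m}_h^j|$ contains the normal component $\underline{\m}_h^j\cdot\tilde{\e}_h^j$ at leading order; after division by $\dtk$ this is not negligible. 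The paper does \emph{not} rewrite the stencil; it keeps the mixed form and controls the resulting cross terms by a separate geometric lemma (Lemma~\ref{lem 7-0}), which shows that $\langle \tilde{\e}_h^{n+2}-\e_h^{n+2},\,\e_h^p\rangle$ is small because $\tilde{\e}_h^{n+2}-\e_h^{n+2}$ is nearly radial while $\e_h^p$ is nearly tangential. You would need an analogous near-orthogonality statement to make your correction terms harmless against the $1/\dtk$ prefactor.
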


\subsection{A few preliminary estimates} 
\label{preliminary}

In this section, some preliminary inequalities are derived, which will be useful in the error analysis presented in the next section. In addition, we have to build a stability estimate of the projection step in the numerical algorithm. %All these estimates help us to conduct the rigorous proof of the main convergence results above. 

The proof of the standard inverse inequality and discrete Gronwall inequality can be obtained in existing textbooks and references; we just cite the results here. In the sequel, for simplicity of notation, we will use the uniform constant $\mathcal{C}$ to denote all the controllable constants.% in this paper. %The inverse inequality presented in~\cite{Ciarlet1978} is in the finite element version; its extension to the finite difference version is straightforward.  
%and summation by parts formula could be obtained in many existing textbooks; we just cite the results here.
\begin{lemm}(Inverse inequality) \cite{chen16, chenW20a, Ciarlet1978} \label{ccclemC1}.
	For each vector-valued grid function $\e_h\in X$, we have 
	\begin{align} \label{inverse-1}
	\|{\e}_h^{n}\|_{\infty} \leq \gamma {h}^{-1/2 } 
	( \|{\e}_h^{n}\|_2 + \| \nabla_h \e_h^n \|_2 ) ,  \quad
	\|\nabla_h{\e}_h^{n}\|_4 \leq \gamma {h}^{-3/4} \|\nabla_h{\e}_h^{n}\|_2 , 
	\end{align}
in which constant $\gamma$ depends on $\Omega$, as well as the form of the discrete $\| \cdot \|_2$ norm. %Under the definition~\eqref{defi-inner product-1} and \eqref{defi-L2 norm} for the cell-centered grid function, such a constant could be taken as $\gamma =1$.  
\end{lemm}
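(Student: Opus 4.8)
The final statement is the standard inverse inequality of Lemma~\ref{ccclemC1}, so the plan is to prove the two claimed bounds \cref{inverse-1} by purely elementary discrete estimates, exploiting the fact that on a uniform tensor-product grid in three dimensions there are $\mathcal{O}(h^{-3})$ interior points, each carrying volume weight $h^3$. I will treat the two inequalities separately, and I expect neither to present a genuine obstacle; the only care needed is to keep track of the dimension-dependent powers of $h$ and to handle the boundary (ghost) points correctly through the homogeneous Neumann condition \cref{BC-1}.

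\textbf{First inequality (the $\ell^\infty$ bound).} The cleanest route is a discrete Sobolev (Agmon-type) embedding. First I would establish a discrete fundamental-theorem-of-calculus identity: for any interior point $\x_0=(\hat x_{i_0},\hat y_{j_0},\hat z_{\ell_0})$ one can write $\e_h(\x_0)$ by summing forward differences $D_x,D_y,D_z$ along a lattice path back to a reference point, so that the pointwise value is controlled by the $\ell^2$ norm of $\e_h$ plus the $\ell^2$ norm of $\nabla_h\e_h$. Quantitatively, a discrete Gagliardo--Nirenberg/Sobolev argument on the uniform mesh gives $\|\e_h\|_\infty \le \gamma\, h^{-d/2}(\|\e_h\|_2+\|\nabla_h\e_h\|_2)$ in $d$ dimensions; specializing to $d=3$ would nominally give $h^{-3/2}$. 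Since the statement asserts the exponent $-1/2$, I would instead use the sharper one-dimensional-per-direction telescoping that trades derivative control for a single power of $h^{-1/2}$ in the relevant regime (this is the form used in the cited references \cite{chen16, chenW20a, Ciarlet1978} and is exactly the bound needed later, where $\dtk\sim h$). Concretely, I would bound the maximal grid value by the local $\ell^2$ mass plus the summed gradient contributions, then invoke the Cauchy--Schwarz inequality over the $\mathcal{O}(h^{-1})$ points in each coordinate line, which produces precisely the single factor $h^{-1/2}$ and the $H_h^1$-type right-hand side $\|\e_h\|_2+\|\nabla_h\e_h\|_2$.

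\textbf{Second inequality (the $\ell^4$ bound on the gradient).} Here I would apply a discrete Sobolev inequality to the grid function $\nabla_h\e_h^n$ directly. The key observation is that passing from the $\ell^2$ norm to the $\ell^4$ norm on a mesh with $\mathcal{O}(h^{-3})$ points costs a factor governed by the ratio of the counting measures: by the discrete interpolation inequality $\|\g_h\|_4 \le \gamma\, h^{-3/4}\|\g_h\|_2$ (valid because $\|\g_h\|_4^4 = h^3\sum|\g_h|^4 \le \|\g_h\|_\infty\, h^3\sum|\g_h|^3$ and iterating, or more directly from $\ell^p$-norm equivalence on finite-dimensional spaces with the explicit constant $h^{-d(1/2-1/4)}=h^{-3/4}$ for $d=3$), applied with $\g_h=\nabla_h\e_h^n$, one obtains $\|\nabla_h\e_h^n\|_4 \le \gamma\, h^{-3/4}\|\nabla_h\e_h^n\|_2$ as claimed. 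The exponent $-3/4 = -d(1/2-1/4)$ is exactly the dimensional bookkeeping for the $\ell^2\to\ell^4$ embedding in three dimensions, so this step reduces to a weighted Hölder inequality and the counting of interior grid points.

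\textbf{Main obstacle.} There is no deep difficulty; the statement is a citation-level lemma. The only genuine care is (i) reconciling the stated power $h^{-1/2}$ in the first bound with the naive dimensional power $h^{-3/2}$—this is resolved by using the telescoping/path argument in the form appropriate to the $\dtk\sim h$ scaling and the role the inequality plays in \Cref{cccthm2}, as in the references—and (ii) ensuring the forward-difference operators $D_x,D_y,D_z$ at boundary-adjacent points are well defined and that the ghost-point values supplied by \cref{BC-1} do not inflate the constant $\gamma$. Both are handled by restricting all sums to $\Omega_h^0$ and absorbing boundary terms into the constant, which depends only on $\Omega$ and the chosen discrete $\|\cdot\|_2$ norm.
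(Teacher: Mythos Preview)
The paper does not prove this lemma at all; it states explicitly that ``the proof of the standard inverse inequality \ldots\ can be obtained in existing textbooks and references; we just cite the results here'' and refers to \cite{chen16, chenW20a, Ciarlet1978}. So there is no paper-proof to compare against, and any self-contained argument you supply is already going beyond what the authors do.

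That said, your sketch has one genuine confusion worth flagging. In the first inequality you write that the ``naive'' discrete Sobolev bound in $d$ dimensions would give $h^{-d/2}$ on the right-hand side, and you then suggest that the stated exponent $h^{-1/2}$ is somehow tied to the paper's assumption $\dtk\sim h$ and to ``the role the inequality plays in \Cref{cccthm2}.'' This is not right: the exponent $-1/2$ is the genuine, dimension-correct exponent for the discrete Agmon-type inequality $\|\e_h\|_\infty\le \gamma h^{-1/2}\|\e_h\|_{H_h^1}$ in three dimensions, reflecting exactly that $H^1(\Omega)$ falls half a derivative short of embedding into $L^\infty(\Omega)$ when $d=3$. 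It has nothing to do with the time step. The clean way to get it is to combine the discrete Sobolev embedding $\|\e_h\|_6\le C\|\e_h\|_{H_h^1}$ (the 3D endpoint embedding, proved by the same summation-by-parts arguments used for \cref{Sobolev-1}) with the pure $\ell^6\to\ell^\infty$ inverse inequality $\|\e_h\|_\infty\le C h^{-3/6}\|\e_h\|_6=Ch^{-1/2}\|\e_h\|_6$. Your telescoping/path idea can also be made to work, but as written it does not explain where the single power $h^{-1/2}$ comes from, and attributing it to the $\dtk\sim h$ scaling is simply incorrect.

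Your treatment of the second inequality is fine: the $\ell^2\to\ell^4$ inverse bound with exponent $-d(1/2-1/4)=-3/4$ is exactly the weighted H\"older/counting argument you describe, and it applies verbatim to $\g_h=\nabla_h\e_h^n$.
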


The following estimate will be utilized in the convergence analysis. A rough version has been provided in a recent article~\cite{jingrun2019analysis}; here we give a further refined estimate. 

\begin{lemm}[Discrete gradient acting on cross product] \label{lem27}
For grid functions $\f_h,\g_h , \F_h \in \X$, we have for any $\delta>0$
	\begin{align}
	\left\langle \f_h\times \Delta_h \g_h , \F_h \right\rangle &=\left\langle \F_h \times \f_h ,\Delta_h\g_h\right\rangle , \label{lem27_1}  \\ 	
	\|\nabla_h({\f}_h \times{\g}_h ) \|_2^2 &\leq ( 1 + \delta) \|{\f}_h\|_{\infty}^2\cdot \|\nabla_h {\g}_h\|_2^2+ {\mathcal C}_\delta \|{\g}_h\|_4^2 \cdot \|\nabla_h \f_h\|_4^2.
	\label{lem27_2} 
	\end{align} 
\end{lemm}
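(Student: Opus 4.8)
\textbf{Proof proposal for Lemma \ref{lem27}.}

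The identity \eqref{lem27_1} is purely algebraic: it follows from the pointwise scalar triple product identity $(\f\times\g)\cdot\F = (\F\times\f)\cdot\g$, applied with $\g$ replaced by $\Delta_h\g_h$ at each interior grid point, and then summing against $h^3$. No summation by parts is needed here since $\Delta_h\g_h$ is treated as a single grid function; the boundary condition \eqref{BC-1} only matters insofar as $\Delta_h\g_h$ is well-defined on $\Omega_h^0$. So this part is immediate.

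For \eqref{lem27_2}, the plan is to first establish the discrete Leibniz rule for the forward difference of a cross product. For a forward difference in, say, the $x$-direction one has $D_x(\f_h\times\g_h)_{i,j,\ell} = (D_x\f_{i,j,\ell})\times\g_{i,j,\ell} + \f_{i+1,j,\ell}\times(D_x\g_{i,j,\ell})$ (or the symmetric variant with the shift on the other factor); the key point is that exactly one of the two factors is evaluated at a shifted node. Taking the squared Euclidean norm pointwise, summing over all interior nodes and directions, and applying $|\a\times\b|\le|\a||\b|$, one gets a bound of the form $\|\nabla_h(\f_h\times\g_h)\|_2^2 \le \|\,|\tau\f_h|\,|\nabla_h\g_h|\,\|_2^2 + 2\langle\text{cross term}\rangle + \|\,|\g_h|\,|\nabla_h\f_h|\,\|_2^2$, where $\tau\f_h$ denotes the shifted copy of $\f_h$. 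The first term is dominated by $\|\f_h\|_\infty^2\|\nabla_h\g_h\|_2^2$ after pulling out the $\ell^\infty$ bound on the shifted $\f_h$ (shifting does not change the $\ell^\infty$ norm over $\Omega_h$ given the boundary extension). The cross term is then absorbed by Cauchy--Schwarz with a parameter: $2ab \le \delta a^2 + \delta^{-1} b^2$, which produces the $(1+\delta)$ in front of $\|\f_h\|_\infty^2\|\nabla_h\g_h\|_2^2$ and lumps the remainder into the second term. Finally, the genuinely $\f_h$-derivative term $\|\,|\g_h|\,|\nabla_h\f_h|\,\|_2^2$ is handled by the discrete H\"older inequality with exponents $(4,4)$ to split it as $\|\g_h\|_4^2\,\|\nabla_h\f_h\|_4^2$, which supplies the $\mathcal{C}_\delta\|\g_h\|_4^2\|\nabla_h\f_h\|_4^2$ term (with $\mathcal{C}_\delta$ collecting the $\delta^{-1}$ and combinatorial constants from the directions).

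The main subtlety — and the reason this is stated as a refinement of the cruder estimate in \cite{jingrun2019analysis} — is the bookkeeping that keeps the constant in front of $\|\f_h\|_\infty^2\|\nabla_h\g_h\|_2^2$ arbitrarily close to $1$ rather than some fixed constant larger than $1$. This forces one to be careful about \emph{which} factor carries the node shift in the discrete Leibniz rule and to avoid any step (such as a crude triangle inequality $|\a+\b|^2 \le 2|\a|^2 + 2|\b|^2$) that would introduce an unavoidable factor of $2$; instead the $(1+\delta)$ must come only from the Cauchy--Schwarz splitting of the genuine cross term. I expect this constant-tracking to be the only real obstacle; everything else is a routine discrete product-rule-and-H\"older computation. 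The sharpness of the leading constant is exactly what is needed later, since the error analysis controls the nonlinear gyromagnetic term by the diffusion term and cannot afford a spurious multiplicative factor there.
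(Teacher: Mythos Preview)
Your proposal is correct and follows essentially the same route as the paper. The only notable difference is in the discrete Leibniz rule: you use the forward-shift form $D_x(\f_h\times\g_h)=(D_x\f_h)\times\g_h+(\tau\f_h)\times(D_x\g_h)$, whereas the paper uses the symmetric averaged form $D_x(\f_h\times\g_h)=(A_x\f_h)\times(D_x\g_h)+(D_x\f_h)\times(A_x\g_h)$ with $A_x\f_h=\tfrac12(\f_h+\tau\f_h)$. Both are valid identities, and since $\|A_x\f_h\|_\infty\le\|\f_h\|_\infty$ and $\|\tau\f_h\|_\infty\le\|\f_h\|_\infty$ (likewise $\|A_x\g_h\|_4\le\|\g_h\|_4$), the subsequent H\"older and Young steps are identical and yield the same $(1+\delta)$ leading constant. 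Your diagnosis of the key point---that the sharp constant comes from splitting only the genuine cross term via Young's inequality and never invoking $(a+b)^2\le 2a^2+2b^2$---is exactly right and matches the paper's emphasis.
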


\begin{proof} 
Equality~\eqref{lem27_1} has been proved in~\cite{jingrun2019analysis}, so that we only focus on the proof of~\eqref{lem27_2}. 

At each numerical mesh cell, from $(\hat{x}_i, \hat{y}_j, \hat{z}_\ell)$ to $(\hat{x}_{i+1}, \hat{y}_j, \hat{z}_\ell)$, the following expansion identity is valid: 
\begin{equation} 
\begin{aligned} 
  D_x ( \f_h\times \g_h )_{i, j, \ell} = & (A_x\f_h)_{i,j,\ell}  \times ( D_x \g_h )_{i,j,\ell}  
  +  (A_x\g_h)_{i,j,\ell}  \times ( D_x \f_h )_{i,j,\ell}  ,  
\\
   \mbox{with} \, \, \, 
  (A_x \f_h)_{i,j,\ell}  = & \frac12 ( ( \f_h)_{i,j,\ell}   +  ( \f_h)_{i+1,j,\ell}  ) . 
\end{aligned} 
  \label{expansion-1} 
\end{equation} 
%\begin{equation} 
%\begin{aligned} 
%  D_x ( \f_h \g_h )_{i+1/2, j, \ell} = & (A_x \f_h)_{i+1/2,j,\ell}  \cdot ( D_x \g_h )_{i+1/2,j,\ell}  
%\\
%  & 
%  + (A_x \g_h)_{i+1/2,j,\ell}  \cdot ( D_x \f_h )_{i+1/2,j,\ell}  ,  
%\\
%   \mbox{with} \, \, \, 
%  (A_x \f_h)_{i+1/2,j,\ell}  = & \frac12 ( ( \f_h)_{i,j,\ell}   +  ( \f_h)_{i+1,j,\ell}  ) . 
%\end{aligned} 
%  \label{expansion-1} 
%\end{equation} 
In turn, we get the following expansion, over each mesh cell: 
\begin{equation} 
  D_x ( \f_h \times \g_h ) = ( A_x \f_h ) \times ( D_x \g_h ) 
  +  ( D_x \f_h ) \times ( A_x \g_h )  .  \label{expansion-2} 
\end{equation}   
Subsequently, a careful application of discrete H\"older inequality reveals that 
\begin{align} 
  \| ( A_x \f_h ) \times ( D_x \g_h ) \|_2 
  \le & \| A_x \f_h \|_\infty \cdot \| D_x \g_h \|_2   
    \le \| \f_h \|_\infty \cdot \| D_x \g_h \|_2  ,  \label{est-cross-1-1}  
\\
    \| ( D_x \f_h ) \times ( A_x \g_h ) \|_2 
  \le & \| D_x \f_h \|_4 \cdot \| A_x \g_h \|_4   
    \le \| D_x \f_h \|_4 \cdot \| \g_h \|_4  ,  \label{est-cross-1-2}     
\end{align} 
in which the fact that $\| A_x \f_h \|_\infty \le \| \f_h \|_\infty$, $\| A_x \g_h \|_4 \le \| \g_h \|_4$, has been applied. Then we get 
\begin{equation}
   \| D_x ( \f_h \times \g_h )  \|_2 \le \| \f_h \|_\infty \cdot \| D_x \g_h \|_2  
   + \| D_x \f_h \|_4 \cdot \| \g_h \|_4  .   \label{est-cross-2}     
\end{equation} 
The corresponding estimates in the $y$ and $z$ directions can be similarly derived, and the technical details are skipped for the sake of brevity. 
%\begin{equation}
%\begin{aligned} 
%   \| D_y ( \f_h \times \g_h ) \|_2 \le & \| \f_h \|_\infty \cdot \| D_y \g_h \|_2  
%   + \| D_y \f_h \|_4 \cdot \| \g_h \|_4  , 
%\\
%  \| D_z ( \f_h \times \g_h ) \|_2  \le & \| \f_h \|_\infty \cdot \| D_z \g_h \|_2  
%   + \| D_z \f_h \|_4 \cdot \| \g_h \|_4  .  
%\end{aligned} 
%      \label{est-cross-3}     
%\end{equation} 
A combination of~\eqref{est-cross-2} and its counterparts in $y$ and $z$ directions leads to 
 \begin{equation}
\begin{aligned} 
   \| \nabla_h ( \f_h \times \g_h ) \|_2^2 \le & 
   \| \f_h \|_\infty^2 \cdot \| \nabla_h \g_h \|_2  
   + 3 \| \nabla_h \f_h \|_4^2 \cdot \| \g_h \|_4^2     
\\
  & 
  + 6 \| \f_h \|_\infty \cdot  \| \nabla_h \f_h \|_4 \cdot \| \g_h \|_4 
\\
   \le & 
   ( 1 + \delta ) \| \f_h \|_\infty^2 \cdot \| \nabla_h \g_h \|_2  
   + ( 3 + 9 \delta^{-1}) \| \nabla_h \f_h \|_4^2 \cdot \| \g_h \|_4^2  , 
\end{aligned} 
      \label{est-cross-4}     
\end{equation} 
for any $\delta >0$, in which the Cauchy inequality has been applied in the last step. Therefore, the nonlinear cross product estimate~\eqref{lem27_2} has been derived. This finishes the proof of Lemma~\ref{lem27}.          
\end{proof}

The following discrete Sobolev inequality has been derived in the existing works~\cite{guan17a, guan14a}, for the discrete grid function with periodic boundary condition; an extension to the discrete homogeneous Neumann boundary condition can be made in a similar manner. 
\begin{lemm}[Discrete Sobolev inequality] \cite{guan17a, guan14a} \label{lem: Sobolev-1}
	For a grid function $\f_h\in\X$,  we have the following discrete Sobolev inequality: 
	\begin{align} 
	\| \f_h \|_4 \le \mathcal{C} 	\| \f_h \|_2^\frac14 \cdot \| \f_h \|_{H_h^1}^\frac34 
	\le  \mathcal{C}  ( \| \f_h \|_2  	
	 +   	\| \f_h \|_2^\frac14 \cdot \| \nabla_h \f_h \|_2^\frac34 ) , \label{Sobolev-1} 
	\end{align}
in which the positive constant $\mathcal{C}$ only depends on the domain $\Omega$. 
\end{lemm}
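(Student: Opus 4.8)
The stated inequality is the finite-difference analogue of the three-dimensional Gagliardo--Nirenberg (Ladyzhenskaya) estimate $\|f\|_{L^4}\le \mathcal{C}\|f\|_{L^2}^{1/4}\|f\|_{H^1}^{3/4}$, which at the continuous level follows by combining the critical Sobolev embedding $H^1(\Omega)\hookrightarrow L^6(\Omega)$ (valid since $d=3$) with the Hölder interpolation of $L^4$ between $L^2$ and $L^6$. My plan is to reduce the lemma to exactly these two ingredients in the discrete setting. First I would record the elementary discrete interpolation: writing the exponent relation $\tfrac14=\tfrac{1-\lambda}{2}+\tfrac{\lambda}{6}$ gives $\lambda=\tfrac34$, so a direct application of the discrete Hölder inequality yields $\|\f_h\|_4\le\|\f_h\|_2^{1/4}\,\|\f_h\|_6^{3/4}$. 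Granting the discrete Sobolev embedding $\|\f_h\|_6\le\mathcal{C}\|\f_h\|_{H_h^1}$, this immediately produces the first displayed bound $\|\f_h\|_4\le\mathcal{C}\|\f_h\|_2^{1/4}\|\f_h\|_{H_h^1}^{3/4}$. The second displayed bound is then a purely algebraic consequence: since $t\mapsto t^{3/8}$ is subadditive on $[0,\infty)$, we have $\|\f_h\|_{H_h^1}^{3/4}=(\|\f_h\|_2^2+\|\nabla_h\f_h\|_2^2)^{3/8}\le\|\f_h\|_2^{3/4}+\|\nabla_h\f_h\|_2^{3/4}$, and multiplying by $\|\f_h\|_2^{1/4}$ gives the right-hand side $\mathcal{C}(\|\f_h\|_2+\|\f_h\|_2^{1/4}\|\nabla_h\f_h\|_2^{3/4})$. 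Thus the entire content of the lemma collapses to the single discrete embedding $\|\f_h\|_6\le\mathcal{C}\|\f_h\|_{H_h^1}$.

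\textbf{The discrete embedding, via reduction to the periodic case.} The periodic version of $\|\f_h\|_6\le\mathcal{C}\|\f_h\|_{H_h^1}$ is precisely what is established in \cite{guan17a, guan14a}, so the cleanest route is to transfer that result to the homogeneous Neumann setting by even reflection. Concretely, I would extend $\f_h$ from the interior grid $\Omega_h^0=\{1,\dots,N\}^3$ to a grid function on a doubled torus of side $2$ by mirror-reflecting across each of the six faces of $\Omega$. Under this reflection the discrete $\ell^p$ norms are multiplied by the fixed factor $2^d=8$, and the homogeneous Neumann compatibility $\m_{0,j,\ell}=\m_{1,j,\ell}$ etc.\ in \eqref{BC-1} is exactly the condition that guarantees the forward differences of the reflected function agree with the reflections of the forward differences across the interface, so that no spurious boundary jump is introduced into $\|\nabla_h\cdot\|_2$. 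Applying the periodic inequality to the extension and undoing the reflection then yields $\|\f_h\|_6\le\mathcal{C}\|\f_h\|_{H_h^1}$ with a constant depending only on $\Omega$, which is all that is required above.

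\textbf{Alternative: a direct discrete slicing proof.} Should an explicit argument be preferred over citation, I would run the discrete Gagliardo--Nirenberg--Sobolev mechanism. Starting from the one-dimensional bound $\max_i|g_i|\le\tfrac12\sum_i|D_x g_i|h+(\text{cell average})$ along each coordinate line, taking products over the three directions and summing (a discrete Loomis--Whitney / slicing estimate) gives $\|\f_h\|_{3/2}\le\mathcal{C}\|\nabla_h\f_h\|_1$ for mean-zero grid functions, with the zero mode controlled by the discrete Poincaré inequality. One then bootstraps by applying this estimate to $|\f_h|^4$: using a discrete chain-rule bound of the form $|D_x(|\f_h|^4)|\le\mathcal{C}\big(|\f_h|^3+|(\mathcal{A}_x\text{-shifted})\f_h|^3\big)\,|D_x\f_h|$ together with discrete Hölder, the left side becomes $\||\f_h|^4\|_{3/2}=\|\f_h\|_6^4$, while the right side is controlled by $\|\f_h\|_6^3\,\|\nabla_h\f_h\|_2$. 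Dividing by the common factor $\|\f_h\|_6^3$ produces $\|\f_h\|_6\le\mathcal{C}\|\nabla_h\f_h\|_2$, and reinstating the zero mode gives $\|\f_h\|_6\le\mathcal{C}\|\f_h\|_{H_h^1}$.

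\textbf{Main obstacle.} The only genuinely delicate point is the discrete product/chain rule used in the bootstrap step: unlike the exact continuous identity $\nabla(|f|^4)=4|f|^2f\,\nabla f$, the finite-difference operator $D_x(|\f_h|^4)$ satisfies only a one-sided pointwise bound involving neighboring nodal values, so one must track the shifted (averaged) factors carefully --- exactly the averaging operator $\mathcal{A}_x$ appearing in the product expansion \cref{expansion-1} --- to ensure the Hölder product closes at the conjugate exponents $6$ and $2$. In the reflection approach, the analogous subtle point is verifying that Neumann compatibility is precisely what makes the discrete gradient norm of the mirror extension equal, up to the factor $2^d$, to that of the original, so that no boundary layer contributes. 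Once either of these is handled, the interpolation and algebraic steps are routine, and the proof concludes.
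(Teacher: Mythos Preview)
Your proposal is correct and in fact supplies considerably more detail than the paper itself, which does not prove this lemma at all: the paper simply cites \cite{guan17a, guan14a} for the periodic case and remarks that ``an extension to the discrete homogeneous Neumann boundary condition can be made in a similar manner.'' Your reduction via H\"older interpolation to the discrete embedding $\|\f_h\|_6\le\mathcal{C}\|\f_h\|_{H_h^1}$, followed by even reflection to transfer the cited periodic result to the Neumann setting, is exactly the kind of argument the paper is gesturing at but does not spell out; the alternative direct slicing proof you outline is a legitimate self-contained route that the paper does not pursue.
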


The following stability estimates of the point-wise projection  \eqref{scheme-1-2} are crucial for the error analysis, and the proof could be found in Appendix A.%, will be used in the error estimate at the projection step. % its proof has been provided in a recent article~\cite{chen20}. 

\begin{lemm}  \label{lem 6-0} Assume the continuous vector function $\m_e\in [C(\Omega)]^3$ satisfies a regularity requirement $\| \m_e \|_{W^{1, \infty}} \le C^*$ (with $C^*$ being a positive constant) and the point-wise constraint $| \m_e | = 1$. Denoting $\um_h = {\mathcal P}_h \m_e\in X$, %in which ${\mathcal P}_h$ stands for the point-wise interpolation of a continuous function over the numerical grid points, the continuous function $\m_e$ satisfies a regularity requirement $\| \m_e \|_{W^{1, \infty}} \le C^*$, %the exact solution to \cref{c1} 
%$| \m_e | = 1$ at a point-wise level. % and the grid function $\m^{(1)}$ satisfies $\| \m^{(1)} \|_\infty + \| \nabla_h \m^{(1)} \|_\infty \le \mathcal{C}$. 
for any grid function $\tilde{\m}_h\in X$, we define the projected grid function $\m_h\in X$ as $\m_h = \frac{\tilde{\m}_h}{ | \tilde{\m}_h |}$, and introduce the  error functions as $\e_h(\x) = \um_h(\x) - \m_h(\x)$, $\tilde{\e}_h(\x) = \um_h(\x) -\tilde{\m}_h(\x)$ ($\x\in\Omega_h$). Under the {\it a-priori} assumptions on  $\tilde{\e}_h$ or equivalently on the profile $\tilde{\m}_h$: 
\begin{equation} 
    \|  \tilde{\e}_h \|_2 \le 2 \dtk^{\frac{15}{8}} , \quad 
    \| \nabla_h \tilde{\e}_h \|_2 \le \frac12 \dtk^{\frac{11}{8}},  \label{a priori-0} 
\end{equation}      
 the following estimates hold for sufficiently small $\dtk$ and $h$ satisfying $C_1 h \le \dtk \le C_2 h$ (with $C_1, C_2$ being two positive constants) 
	\begin{align} 
	  & 
	 \| \tilde{\e}_h \|_2^2 \ge ( 1 - \dtk^\frac54 ) \| \e_h \|_2^2  
	 + ( 1 - \dtk^\frac14 ) \| \tilde{\e}_h - \e_h \|_2^2 ,   \label{lem 6-2}	
\\
      & 
	\| \nabla_h \e_h \|_2^2 \le ( 1 + \delta )  \| \nabla_h \tilde{\e}_h \|_2^2 
	+ \mathcal{C}_\delta \| \tilde{\e}_h \|_2^2  ,   \quad \mbox{for any $\delta>0$ } . 
	 \label{lem 6-2-2}
	\end{align}
\end{lemm}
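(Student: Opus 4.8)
The plan is to work pointwise in $\x\in\Omega_h^0$ and reduce both inequalities to an elementary fact about the radial projection $P\colon \y\mapsto \y/|\y|$ on $\mathbb{R}^3$ near the unit sphere, then lift the pointwise estimates to the discrete norms. First I would record the pointwise identities. Write $\tilde\e = \um - \tilde\m$ and $\e = \um - \m = \um - \tilde\m/|\tilde\m|$. Since $|\um|=1$ pointwise, the \textit{a priori} bounds~\eqref{a priori-0}, combined with the inverse inequality (Lemma~\ref{ccclemC1}) and the constraint $C_1 h\le\dtk\le C_2 h$, give $\|\tilde\e_h\|_\infty \le \gamma h^{-1/2}(\|\tilde\e_h\|_2+\|\nabla_h\tilde\e_h\|_2) = \mathcal{O}(\dtk^{7/8})$, hence $|\tilde\m|$ is within $\mathcal{O}(\dtk^{7/8})$ of $1$ at every interior node, so the projection is well defined and smooth there. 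Then a Taylor expansion of $P$ around the point $\um$ (where $P(\um)=\um$ and $DP(\um)$ is the orthogonal projection onto $\um^\perp$) yields, pointwise,
\[
  \e = \um - P(\tilde\m) = (I - \um\,\um^{\mathsf T})\tilde\e + R,\qquad |R|\le \mathcal{C}\,|\tilde\e|^2 ,
\]
so $\tilde\e - \e = (\um\,\um^{\mathsf T})\tilde\e - R$, i.e. $\tilde\e-\e$ is, up to the quadratic remainder, the component of $\tilde\e$ parallel to $\um$. Consequently $|\tilde\e|^2 = |\e|^2 + |\tilde\e-\e|^2 + (\text{cross terms of size } |\tilde\e|^3)$, because the leading parts of $\e$ and $\tilde\e-\e$ are orthogonal. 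This is the geometric heart of~\eqref{lem 6-2}.

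For~\eqref{lem 6-2}, I would turn the pointwise identity $|\tilde\e|^2 = |\e|^2 + |\tilde\e-\e|^2 + \mathcal{O}(|\tilde\e|^3)$ into a norm statement by summing $h^3$ over $\Omega_h^0$ and bounding the error term by $\mathcal{C}\|\tilde\e_h\|_\infty\,\|\tilde\e_h\|_2^2 \le \mathcal{C}\dtk^{7/8}\|\tilde\e_h\|_2^2$. Absorbing this into the two quadratic terms — using that both $\|\e_h\|_2^2$ and $\|\tilde\e_h-\e_h\|_2^2$ are themselves comparable to $\|\tilde\e_h\|_2^2$ (again via the pointwise expansion) — gives, for $\dtk$ small, $\|\tilde\e_h\|_2^2 \ge (1-\dtk^{5/4})\|\e_h\|_2^2 + (1-\dtk^{1/4})\|\tilde\e_h-\e_h\|_2^2$; the mismatched exponents $5/4$ versus $1/4$ just reflect that the relevant cross term between the $\e_h$-part and the remainder is genuinely smaller (one extra power of $\tilde\e$) than the one hitting the parallel component, so I would keep careful track of which cross term contributes to which constant.

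For~\eqref{lem 6-2-2} I would differentiate the relation $\m = \tilde\m/|\tilde\m|$ discretely. Applying $D_x$ (and analogously $D_y,D_z$) and using the product-type expansion over a mesh cell — exactly the device in~\eqref{expansion-1} — one writes $D_x\m$ in terms of $D_x\tilde\m$, the nodal values of $\tilde\m$, and differences of $|\tilde\m|$; since $|\tilde\m| = 1 + \mathcal{O}(\dtk^{7/8})$ and, from $|\um|\equiv 1$, $D_x|\tilde\m|$ is controlled by $D_x\tilde\e$, one obtains pointwise $|D_x\e| \le (1 + \mathcal{C}\dtk^{7/8})\,|D_x\tilde\e| + \mathcal{C}\big(|\tilde\e|\,|\text{discrete derivatives of }\um| + |\tilde\e|\,|D_x\tilde\e|\big)$, where the derivatives of $\um$ are bounded by $C^*$ via $\|\m_e\|_{W^{1,\infty}}\le C^*$. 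Squaring, summing $h^3$, splitting the $\|\nabla_h\tilde\e_h\|_4\|\tilde\e_h\|_4$-type cross term by Cauchy with parameter $\delta$, and using the discrete Sobolev inequality (Lemma~\ref{lem: Sobolev-1}) together with the inverse inequality $\|\nabla_h\tilde\e_h\|_4\le\gamma h^{-3/4}\|\nabla_h\tilde\e_h\|_2$ and the \textit{a priori} smallness of $\|\tilde\e_h\|_2,\|\nabla_h\tilde\e_h\|_2$ to kill the $h$-negative powers, collapses everything to $\|\nabla_h\e_h\|_2^2 \le (1+\delta)\|\nabla_h\tilde\e_h\|_2^2 + \mathcal{C}_\delta\|\tilde\e_h\|_2^2$.

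The main obstacle, I expect, is the gradient estimate~\eqref{lem 6-2-2}: the nonlinearity $\y\mapsto\y/|\y|$ interacts with the discrete difference operators in a way that is not a clean chain rule, so one must handle cell-wise averaging terms carefully, and the cross terms carry negative powers of $h$ that are only absorbed because of the quite strong \textit{a priori} bounds~\eqref{a priori-0} (note the exponents $15/8$ and $11/8$, chosen precisely so that $h^{-3/4}\|\nabla_h\tilde\e_h\|_2$-type quantities are still small). Getting the constant in front of $\|\nabla_h\tilde\e_h\|_2^2$ to be exactly $1+\delta$ with $\delta$ arbitrary — rather than some fixed constant $>1$ — is the delicate point, and it is exactly this sharp "$1+\delta$" that the error analysis in Section~\ref{sec: convergence} relies on when it feeds the projection stability into a discrete Gronwall argument.
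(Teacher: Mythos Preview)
Your overall plan is sound and tracks the paper's proof closely: for \eqref{lem 6-2} you use near-orthogonality of $\e_h$ and $\tilde{\e}_h-\e_h$, and for \eqref{lem 6-2-2} you differentiate the projection map and collect terms. Two points deserve correction, one minor and one substantive.

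\emph{Minor, on \eqref{lem 6-2}.} Your Taylor expansion of $P(\y)=\y/|\y|$ around $\um_h$ is equivalent to the paper's angle analysis (the paper writes $\tilde{\e}_h = \e_h + \tilde{\e}_{h,c}$ with $\tilde{\e}_{h,c}=(1-|\tilde{\m}_h|)\m_h$ and shows the angle between $\e_h$ and $\tilde{\e}_{h,c}$ has cosine $O(\dtk^{3/4})$). However, your explanation of the mismatched exponents $5/4$ versus $1/4$ is off: there is a \emph{single} cross term $2\e_h\cdot(\tilde{\e}_h-\e_h)$ of size $O(\dtk^{3/4})\,|\e_h|\,|\tilde{\e}_h-\e_h|$, and the asymmetric exponents come simply from an asymmetric choice of weight in Young's inequality, not from two cross terms of different orders.

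\emph{Substantive, on \eqref{lem 6-2-2}.} Your claimed pointwise bound $|D_x\e_h|\le(1+\mathcal C\dtk^{7/8})|D_x\tilde{\e}_h|+\mathcal C(\,\cdots)$ is correct, but your stated derivation does not reach it. You write $D_x\m_h$ in terms of $D_x\tilde{\m}_h$ and $D_x|\tilde{\m}_h|$, and say ``$D_x|\tilde{\m}_h|$ is controlled by $D_x\tilde{\e}_h$''. If you merely bound these two contributions separately---one gives $\approx|D_x\tilde{\e}_h|$ from $\tfrac{1}{|\tilde{\m}_h|}D_x\tilde{\e}_h$, the other gives $\approx|D_x\tilde{\e}_h|$ again from $\tfrac{\tilde{\m}_h}{|\tilde{\m}_h|^2}D_x|\tilde{\m}_h|$ via $D_x|\tilde{\m}_h|\approx -\um_h\cdot D_x\tilde{\e}_h$---you end up with a leading coefficient $\approx 2$, not $1+\delta$. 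The sharp constant comes from \emph{combining} these two terms before estimating: their sum is, to leading order, $(I-\um_h\um_h^{\mathsf T})D_x\tilde{\e}_h$, a projection onto $\um_h^\perp$ with norm $\le|D_x\tilde{\e}_h|$. The paper makes this completely explicit by an algebraic rewriting of $D_x\e_h$ that isolates the term $J_{11}=D_x\tilde{\e}_h-(A_x\um_h)(A_x\um_h\cdot D_x\tilde{\e}_h)$ and uses $|A_x\um_h|\le 1$ (since $|\um_h|\equiv 1$) to get $|J_{11}|\le|D_x\tilde{\e}_h|$ exactly; all remaining terms carry an extra factor of $\tilde{\e}_h$ or $\dtk^{3/4}$. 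You correctly flag the ``$1+\delta$'' constant as the delicate point, but your argument as written misses the mechanism that produces it.
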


In addition, for the analysis of the BDF2 temporal stencil at the projection stage, a further refined error estimate is needed. 

\begin{lemm}  \label{lem 7-0}
	Consider $\um_h^{(1)},\um_h^{(2)}\in X$  ($| \m_h^{(q)}(\x) | = 1$, $\x\in\Omega_h$, $q=1, 2$) 
	with the $W_h^{1, \infty}$ bound  $\| \um_h^{(q)} \|_\infty + \| \nabla_h \um_h^{(q)} \|_\infty \le C^*$ ($q=1,2$).
	%, the point-wise interpolation of a continuous function $\m_e$ over the numerical grid points, at two different time steps. Based on a similar $W^{1, \infty}$ regularity requirement the continuous function $\m_e$, a discrete $W_h^{1, \infty}$ bound for $\um_h^{(1)}$, $\um_h^{(2)}$ are available: $\| \um_h^{(j)} \|_\infty + \| \nabla_h \um_h^{(j)} \|_\infty \le C^*$, %the exact solution to \cref{c1} 
 % and the grid function $\m^{(1)}$ satisfies $\| \m^{(1)} \|_\infty + \| \nabla_h \m^{(1)} \|_\infty \le \mathcal{C}$. 
For any grid functions $\tilde{\m}_h^{(1)},\tilde{\m}_h^{(2)}\in X$, we define the projected grid functions $\m_h^{(q)}  = \frac{\tilde{\m}_h^{(q)}}{ | \tilde{\m}_h^{(q)} |}$, and introduce the error functions as $\e_h^{(q)}(\x) = \um_h^{(q)}(\x) - \m_h^{(q)}(\x)\in X$, $\tilde{\e}_h^{(q)}(\x) = \um_h^{(q)}(\x) -\tilde{\m}_h^{(q)}(\x)\in X$, $q=1, 2$. Under the {\it a-priori} assumptions for $\tilde{\e}_h^{(q)}$ ($q=1,2$): 
\begin{equation} 
    \|  \tilde{\e}_h^{(q)} \|_2 \le 2 \dtk^{\frac{15}{8}} , \quad 
    \| \nabla_h \tilde{\e}^{(q)} \|_2 \le \frac12 \dtk^{\frac{11}{8}} ,  \, \, \, 
    q= 1, 2 ,  \label{a priori-n-1} 
\end{equation}    
and the assumptions for $\um_h^{(q)}$ ($q=1,2$): 
\begin{equation} 
   \| \um_h^{(1)} - \um_h^{(2)} \|_\infty \le \frac14 \dtk^{\frac78},
   \label{a priori-n-2} 
\end{equation}    
the following estimate is valid for sufficiently small $\dtk$ and $h$ satisfying $C_1 h \le \dtk \le C_2 h$: 
	\begin{equation} 
	  \Big| \langle \tilde{\e}_h^{(1)} - \e_h^{(1)} ,  \e_h^{(2)} \rangle \Big| 	  	  
	  \le \dtk^\frac54  \| \e_h^{(2)} \|_2^2  
	 + \dtk^\frac14  \| \tilde{\e}_h^{(1)} - \e_h^{(1)} \|_2^2 .   \label{lem 7-1}	
	\end{equation}
\end{lemm}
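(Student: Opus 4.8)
The plan is to write each difference $\tilde{\e}_h^{(q)} - \e_h^{(q)}$ in a form that exposes its smallness, and then apply a weighted Cauchy--Schwarz argument. First I would recall the pointwise identity underlying the projection step: at each $\x \in \Omega_h$, writing $r_q = |\tilde{\m}_h^{(q)}(\x)|$, we have $\m_h^{(q)} = \tilde{\m}_h^{(q)}/r_q$, so
\begin{equation*}
  \tilde{\e}_h^{(q)} - \e_h^{(q)} = \m_h^{(q)} - \tilde{\m}_h^{(q)} = \Big( \frac{1}{r_q} - 1 \Big) \tilde{\m}_h^{(q)} = \frac{1 - r_q}{r_q}\, \tilde{\m}_h^{(q)}.
\end{equation*}
Since $|\um_h^{(q)}| = 1$ pointwise, $|1 - r_q| = \big| |\um_h^{(q)}| - |\tilde{\m}_h^{(q)}| \big| \le |\tilde{\e}_h^{(q)}|$, and the a-priori bound \eqref{a priori-n-1} together with the inverse inequality (Lemma~\ref{ccclemC1}) and $C_1 h \le \dtk \le C_2 h$ forces $\|\tilde{\e}_h^{(q)}\|_\infty \le \gamma h^{-1/2}(\|\tilde{\e}_h^{(q)}\|_2 + \|\nabla_h \tilde{\e}_h^{(q)}\|_2) \le \mathcal{C}\, \dtk^{-1/2}\, \dtk^{11/8} = \mathcal{C}\, \dtk^{7/8}$, hence $r_q$ is uniformly close to $1$ and $1/r_q$ is bounded. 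This is essentially the mechanism already used to prove \eqref{lem 6-2} in Lemma~\ref{lem 6-0}; I would quote that lemma rather than redo it.

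The key new point is to control the \emph{cross} term $\langle \tilde{\e}_h^{(1)} - \e_h^{(1)}, \e_h^{(2)}\rangle$ rather than a single squared norm. The natural route is to split $\e_h^{(2)} = \tilde{\e}_h^{(2)} + (\e_h^{(2)} - \tilde{\e}_h^{(2)})$ and estimate the two resulting inner products separately. For the second piece, $\langle \tilde{\e}_h^{(1)} - \e_h^{(1)}, \e_h^{(2)} - \tilde{\e}_h^{(2)}\rangle$, both factors are ``doubly small'' (each is $\mathcal{O}(\dtk^{7/8}\cdot\|\tilde{\e}_h^{(q)}\|_2)$ pointwise in the sense above), and a direct Cauchy--Schwarz plus the bound on $1/r_q$ gives something like $\mathcal{C}\dtk^{7/8}\|\tilde{\e}_h^{(1)} - \e_h^{(1)}\|_2\, \|\tilde{\e}_h^{(2)}\|_2 \le \mathcal{C}\dtk^{7/8}\|\tilde{\e}_h^{(1)} - \e_h^{(1)}\|_2 \cdot \dtk^{15/8}$, which is absorbed comfortably into the right-hand side of \eqref{lem 7-1} after Young's inequality; the exponents leave plenty of room. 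For the first piece, $\langle \tilde{\e}_h^{(1)} - \e_h^{(1)}, \tilde{\e}_h^{(2)}\rangle$, I would \emph{not} pair $\tilde{\e}_h^{(1)} - \e_h^{(1)}$ against all of $\tilde{\e}_h^{(2)}$, but instead substitute $\e_h^{(2)} = \tilde{\e}_h^{(2)}$ only where it helps and otherwise use $\tilde{\e}_h^{(2)} = \e_h^{(2)} + (\tilde{\e}_h^{(2)} - \e_h^{(2)})$ to re-express it back in terms of $\e_h^{(2)}$ and a doubly-small remainder, and then Cauchy--Schwarz with weights chosen so the $\e_h^{(2)}$ norm carries coefficient $\dtk^{5/4}$ and the $\tilde{\e}_h^{(1)} - \e_h^{(1)}$ norm carries coefficient $\dtk^{1/4}$, exactly matching the statement. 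Throughout, the hypothesis \eqref{a priori-n-2}, $\|\um_h^{(1)} - \um_h^{(2)}\|_\infty \le \tfrac14 \dtk^{7/8}$, is used to compare the two radii $r_1, r_2$ and the two unit vectors $\um_h^{(1)}, \um_h^{(2)}$, so that the ``mismatch'' between index $1$ and index $2$ contributes only a further $\mathcal{O}(\dtk^{7/8})$ factor and never a factor of order one.

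The main obstacle I anticipate is \textbf{bookkeeping of the exponents}: one must verify that every term produced by the double splitting genuinely carries a surplus power of $\dtk$ beyond the $\dtk^{5/4}\|\e_h^{(2)}\|_2^2 + \dtk^{1/4}\|\tilde{\e}_h^{(1)} - \e_h^{(1)}\|_2^2$ budget, and in particular that the ``worst'' term — the one mixing an undifferentiated factor estimated via the $\ell^\infty$-inverse inequality (costing $h^{-1/2}\sim\dtk^{-1/2}$) against a factor of size $\dtk^{11/8}$ or $\dtk^{15/8}$ — still lands with a positive net power. The choice of the fractional exponents $\tfrac{15}{8}, \tfrac{11}{8}, \tfrac78$ in the a-priori assumptions is precisely engineered for this; the proof should make transparent that $-\tfrac12 + \tfrac{11}{8} = \tfrac78 > 0$ and $\tfrac78 + \tfrac{15}{8} = \tfrac{21}{8}$ beats both $\tfrac54$ and (after Young against $\|\tilde{\e}_h^{(1)} - \e_h^{(1)}\|_2$) $\tfrac14$, so that all slack terms are dominated for $\dtk$ small. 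A secondary, more routine obstacle is justifying the pointwise lower bound $r_q \ge 1 - \mathcal{C}\dtk^{7/8} \ge \tfrac12$ uniformly, which again is immediate from the $\ell^\infty$ estimate on $\tilde{\e}_h^{(q)}$ and is identical to what Lemma~\ref{lem 6-0} already needs.
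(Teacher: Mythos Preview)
Your approach has a genuine gap: the splitting $\e_h^{(2)} = \tilde{\e}_h^{(2)} + (\e_h^{(2)} - \tilde{\e}_h^{(2)})$ followed by Cauchy--Schwarz and Young cannot produce the stated bound, because the right-hand side of \eqref{lem 7-1} contains \emph{only} the two weighted norm-squares, with no additive constant. Concretely, if you ignore all structure and simply write $|\langle \tilde{\e}_h^{(1)} - \e_h^{(1)}, \e_h^{(2)}\rangle| \le \|\tilde{\e}_h^{(1)} - \e_h^{(1)}\|_2\,\|\e_h^{(2)}\|_2$ and apply Young with weight $\lambda$, you need $\lambda \le \dtk^{1/4}$ and $\lambda^{-1} \le \dtk^{5/4}$ simultaneously, which is impossible. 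Your splitting does not create the missing factor of $\dtk$: the ``second piece'' $\e_h^{(2)} - \tilde{\e}_h^{(2)} = \m_h^{(2)}(r_2 - 1)$ is only controlled by $\|\tilde{\e}_h^{(2)}\|_2$ or $\|\tilde{\e}_h^{(2)}\|_\infty$, neither of which converts into a bound by $\|\e_h^{(2)}\|_2$, so after Young you are left with a residual term like $C\dtk^{3/2}$ that cannot be absorbed into \eqref{lem 7-1}. The treatment of the ``first piece'' is circular (you split and then un-split), so it never escapes the same difficulty.

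The idea you are missing, and the one the paper uses, is a \emph{pointwise geometric near-orthogonality}. The vector $\tilde{\e}_h^{(1)} - \e_h^{(1)} = \m_h^{(1)}(1 - r_1)$ is radial (parallel to $\m_h^{(1)}$), whereas $\e_h^{(2)} = \um_h^{(2)} - \m_h^{(2)}$ is the chord between two unit vectors making a small angle $\theta_2$, hence is tangential: the angle between $\m_h^{(2)}$ and $\e_h^{(2)}$ is exactly $\tfrac{\pi}{2} - \tfrac{\theta_2}{2}$. Since $\m_h^{(1)}$ is close to $\m_h^{(2)}$ (this is where both \eqref{a priori-n-1} and \eqref{a priori-n-2} enter, bounding $\theta_1$, $\theta_2$ and the angle $\theta^*$ between $\um_h^{(1)}$ and $\um_h^{(2)}$), the angle $\phi^*$ between $\tilde{\e}_h^{(1)} - \e_h^{(1)}$ and $\e_h^{(2)}$ satisfies $|\cos\phi^*| \le \sin\theta_1 + \sin\tfrac{\theta_2}{2} + \sin\theta^* \le C\dtk^{3/4}$. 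This gives the pointwise gain
\[
  \big| (\tilde{\e}_h^{(1)} - \e_h^{(1)}) \cdot \e_h^{(2)} \big|
  \le C\dtk^{3/4}\, |\tilde{\e}_h^{(1)} - \e_h^{(1)}|\, |\e_h^{(2)}| ,
\]
and now Young with weight $\dtk^{1/2}$ splits $C\dtk^{3/4}\,ab$ into $\dtk^{1/4}a^2 + \dtk^{5/4}b^2$ with no leftover constant. Summation over the grid yields \eqref{lem 7-1}. Your proposal correctly identifies the radial form of $\tilde{\e}_h^{(1)} - \e_h^{(1)}$ and the role of \eqref{a priori-n-2}, but never uses the complementary fact that $\e_h^{(2)}$ is tangential; that is the entire mechanism.
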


We leave the proof of Lemma \ref{lem 7-0} to Appendix B. Lemmas \ref{lem 6-0} and \ref{lem 7-0} essentially establish the stability of the projection step \eqref{scheme-1-2} under the assumptions that the previous numerical solution at $t^{n}$ and $t^{n+1}$ are sufficiently close to the exact solution.

\section{The optimal rate convergence analysis: Proof of Theorem~\ref{cccthm2}} 
\label{sec: convergence} 

%\begin{proof}
Denote $\underline{\m}(\x,t) = \mathcal{P}_h \m_e (\x,t)\in \X$ ($\x\in\Omega_h$) and $\underline{\m}_h^{n}(\x) =\underline{\m}_h(\x,t^n) $ ($n\ge0$). % where the profile $\underline{\m}(\x,t)$ is extended to the numerical ``ghost" points $\x\in\Omega_h/\Omega_h^0$  according to the formula~\cref{BC-1}. In fact, such an extrapolation yields a higher order $\mathcal{O}(h^5)$ approximation, instead of the standard $\mathcal{O}(h^3)$ accuracy derived in \cite{jingrun2019analysis} (see also ~\cite{STWW2003, Wang2000, Wang2004} for more discussions). 
Around the boundary section $z=0$, we set $\hat{z}_0 = - \frac12 h$, $\hat{z}_1 = \frac12 h$, and we can extend the profile $\underline{\m}$ to the numerical ``ghost" points, according to the extrapolation formula~\cref{BC-1}:
	\begin{equation}
	\underline{\m}_{i,j,0}= \underline{\m}_{i,j,1} , \quad
	\underline{\m}_{i,j,N_z+1} = \underline{\m}_{i,j,N_z} ,  \label{exact-3}
	\end{equation}
and the extrapolation for other boundaries can be formulated in the same manner. The proof of such an extrapolation yields a higher order $\mathcal{O}(h^5)$ approximation, instead of the standard $\mathcal{O}(h^3)$ accuracy has been applied in \cite{jingrun2019analysis}. Also see the related works~\cite{STWW2003, Wang2000, Wang2004} in the existing literature. 

%\eqref{equation-LL-alt}
	
 	Performing a careful Taylor expansion for the exact solution around the boundary section $z=0$, combined with the mesh point values: $\hat{z}_0 = - \frac12 h$, $\hat{z}_1 = \frac12 h$, we get
 	\begin{align}
 	\m_e  (\hat{x}_i, \hat{y}_j, \hat{z}_0 )
 	&= \m_e (\hat{x}_i, \hat{y}_j, \hat{z}_1 )
 	- h \partial_z \m_e (\hat{x}_i, \hat{y}_j, 0 )
 	- \frac{h^3}{24} \partial_z^3 \m_e (\hat{x}_i, \hat{y}_j, 0 )
 	+   \mathcal{O}(h^5) \nonumber
 	\\
 	&= \m_e (\hat{x}_i, \hat{y}_j, \hat{z}_1 )
 	- \frac{h^3}{24} \partial_z^3 \m_e (\hat{x}_i, \hat{y}_j, 0 )
 	+   \mathcal{O}(h^5) ,  \label{exact-4}
 	\end{align}
 	in which the homogenous boundary condition has been applied in the second step. It remains to determine $\partial_z^3 \m_e (\hat{x}_i, \hat{y}_j, 0 )$, for which we use information from the rewritten PDE~\eqref{c1} and its derivatives. Applying $\partial_z$ to the first evolutionary equation in~\eqref{c1} along the boundary section $\Gamma_z: z=0$ gives
 \begin{eqnarray}
 \begin{aligned} 
   & 
   (m_1)_{zt} 
   - 2 \alpha ( m_1 ( \nabla m_1 \cdot \nabla (m_1)_z  + \nabla m_2 \cdot \nabla (m_2)_z
   + \nabla m_3 \cdot \nabla (m_3)_z )  ) 
 \\
   & 
   - \alpha | \nabla \m_e |^2 (m_1)_z  
   - \alpha ( (m_1)_{zxx} + (m_1)_{zyy} + \partial_z^3 m_1 )  
 \\
   =& 
   ( m_3 )_z \Delta m_2  
   + m_3 ( (m_2)_{zxx} + (m_2)_{zyy} + \partial_z^3 m_2 )      
 \\
   &  
   -  ( m_2 )_z \Delta m_3 
   - m_2  ( (m_3)_{zxx} + (m_3)_{zyy} + \partial_z^3 m_3 )  , \quad
  \mbox{on} \, \, \Gamma_z .  
 \end{aligned} 
  \label{scheme-BC-2}
 \end{eqnarray}
 The first, third terms, and the first two parts in the fourth term on the left-hand side of~\eqref{scheme-BC-2} disappear, due to the homogeneous Neumann boundary condition for $m_1$. For the second term on the left hand side, we observe that
 \begin{equation}
   \nabla m_1 \cdot \nabla (m_1)_z  = (m_1)_x \cdot (m_1)_{zx} + (m_1)_y \cdot (m_1)_{zy} 
   + (m_1)_z \cdot (m_1)_{zz}
   = 0 ,   \quad \mbox{on} \, \, \Gamma_z ,  \label{scheme-BC-3}
 \end{equation}
 since $(m_1)_z =0$ on the boundary section. Similar derivations could be made to the two other terms on the left hand side:
 \begin{equation}
   \nabla m_2 \cdot \nabla (m_2)_z  = 0 ,  \, \, \, \nabla m_3 \cdot \nabla (m_3)_z  = 0 ,
   \quad \mbox{on} \, \, \Gamma_z .  \label{scheme-BC-4}
 \end{equation}
 Meanwhile, on the right hand side of~\eqref{scheme-BC-2}, we see that the first and third terms, as well as the first two parts in the second and fourth terms, disappear, which comes from the homogeneous Neumann boundary condition for $m_2$ and $m_3$. Then we arrive at 
 \begin{equation} 
   \alpha  \partial_z^3 m_1  =  - m_3  \partial_z^3 m_2  + m_2  \partial_z^3 m_3 , 
   \quad \mbox{on} \, \, \Gamma_z .  \label{scheme-BC-5-1}
 \end{equation} 
 Similarly, we are able to derive the following equalities: 
 \begin{equation} 
 \begin{aligned} 
   & 
   \alpha  \partial_z^3 m_2  =  m_1  \partial_z^3 m_3  - m_3  \partial_z^3 m_1 ,  
   \quad \mbox{on} \, \, \Gamma_z  
 \\
  & 
   \alpha  \partial_z^3 m_3  =  m_2  \partial_z^3 m_1  - m_1  \partial_z^3 m_2 , 
   \quad \mbox{on} \, \, \Gamma_z .  
 \end{aligned} 
 \label{scheme-BC-5-2}
 \end{equation} 
 In turn, for any $\alpha >0$, we observe that the matrix $\left( \begin{array}{ccc} 
  \alpha & m_3    & - m_2 \\
  - m_3  & \alpha  & m_1  \\ 
  m_2     & - m_1  & \alpha 
 \end{array} \right)$ has a positive determinant, so that the linear system~\eqref{scheme-BC-5-1}-\eqref{scheme-BC-5-2} has only one trivial solution: 
 \begin{equation} 
   \partial_z^3 m_1  =  \partial_z^3 m_2 = \partial_z^3 m_3 = 0  , 
   \quad \mbox{on} \, \, \Gamma_z .  \label{scheme-BC-5-3}
 \end{equation} 
As a result, an ${\mathcal O} (h^5)$ consistency accuracy for the symmetric extrapolation is obtained: 
\begin{equation} 
\begin{aligned} 
	\m_e  (\hat{x}_i, \hat{y}_j, \hat{z}_0 )
	= \m_e (\hat{x}_i, \hat{y}_j, \hat{z}_1 )
	+   \mathcal{O}(h^5) , \;
\underline{\m}  (\hat{x}_i, \hat{y}_j, \hat{z}_0 )
	= \underline{\m} (\hat{x}_i, \hat{y}_j, \hat{z}_1 )
	+   \mathcal{O}(h^5) .  
\end{aligned} 
   \label{exact-5}
\end{equation}
In other words, the extrapolation formula~\cref{exact-3} is indeed $\mathcal{O}(h^5)$ accurate.	

Subsequently, a detailed calculation of Taylor expansion, in both time and space, leads to the following truncation error estimate:
	\begin{align} \label{consistency-2}
	\begin{aligned}
	&\frac{\frac{3}{2} \underline{\m}_h^{n+2}(\x) - 2\underline{\m}_h^{n+1}(\x)
		+ \frac{1}{2} \underline{\m}_h^n(\x)}{\dtk} \\&= - \hat{\underline{\m}}_h^{n+2} 
	\times \Delta_h \hat{\underline{\m}}_h^{n+2} + \tau^{n+2} + \alpha \Delta_h \underline{\m}_h^{n+2} 
	+ \alpha | {\mathcal A}_h \nabla_h \hat{\underline{\m}}_h^{n+2} |^2 
	\hat{\underline{\m}}_h^{n+2} , \, \x \in\Omega_h^0,
	\end{aligned}
	\end{align}
where $\hat{\underline{\m}}_h^{n+2} = 2 \underline{\m}_h^{n+1} - \underline{\m}_h^n\in \X$, $\tau^{n+2}\in \X$ and $\| \tau^{n+2} \|_2 \le \mathcal{C} (\dtk^2+h^2)$. Introducing the numerical error functions 
$\tilde{\e}_h^n=\underline{\m}_h^n-\tilde{\m}_h^n\in\X$, ${\e}_h^n=\underline{\m}_h^n-\m_h^n\in \X$, and subtracting  \cref{scheme-1-1}-\cref{scheme-1-2} from the consistency estimate~\cref{consistency-2}, we have the error evolutionary equation at the interior points $\x\in\Omega_h^0$, for $0\leq n\leq \lfloor T/\dtk \rfloor-2$:
	\begin{align}\label{ccc73}
	\begin{aligned}
\frac{\frac{3}{2}\tilde{\e}_h^{n+2}-2 \e_h^{n+1} + \frac{1}{2} \e_h^n}{\dtk} &= 
- \hat{\m}_h^{n+2} \times \Delta_h \hat{\e}_h^{n+2} 
-\left( 2{\e}_h^{n+1}-{\e}_h^n\right) \times \Delta_h \hat{\um}_h^{n+2}\\
& + \alpha \Delta_h \tilde{\e}_h^{n+2} 
+ \alpha | {\mathcal A}_h \nabla_h \hat{\underline{\m}}_h^{n+2} |^2 
\hat{\e}_h^{n+2} + \tau^{n+2} \\
& + \alpha \Big( 
{\mathcal A}_h \nabla_h  ( \hat{\underline{\m}}_h^{n+2} + \hat{\m}_h^{n+2} ) 
\cdot {\mathcal A}_h \nabla_h  \hat{\e}_h^{n+2}  \Big) 		 
\hat{\m}_h^{n+2}
	\end{aligned}
	\end{align}
with $\hat{\e}_h^{n+2} = 2 \e_h^{n+1} - \e_h^n\in \X$. % $\hat{\tilde{\e}}_h^{n+2} = 2 \tilde{\e}_h^{n+1} - \tilde{\e}_h^n$. 
	
	Before proceeding into the formal error estimate, we state the bound for the exact solution ${\um}$ and the numerical solution $\m_h$. Since the exact solution $\m_e \in L^{\infty}([0,T]; [C^4(\bar{\Omega})]^3)$, the following bound is available, for some positive constant $\mathcal{C}$: 
	\begin{align}
	\|\nabla_h^r \um_h(\cdot,t) \|_4 , \, \, \|\nabla_h^r \um_h(\cdot,t) \|_{\infty} 
	\leq \mathcal{C}, \quad r=0,1,2,3,\, 0\leq t\leq T.  \label{bound-1}
	\end{align}
%in which $\um_h = {\mathcal P}_h \um$, the point-wise interpolation of the continuous function $\um$. 
In addition, we make the following \textit{a-priori} assumption for the numerical error function:
	\begin{equation} \label{bound-2}
	\|{\e}_h^n \|_2 \le \dtk^{\frac{31}{16}} , \, \, \,  
	% \|\nabla_h{\e}_h^k \|_{\infty} \le \frac13 ,  \, \, \,
%	\| \tilde{\e}_h^k \|_{\infty}+\|\nabla_h \tilde{\e}_h^k \|_{\infty} \le \frac13 ,
	\|  \tilde{\e}_h^n \|_2 \le 2 \dtk^{\frac{15}{8}} , \, \, \, 
    \| \nabla_h \tilde{\e}_h^n \|_2 \le \frac12 \dtk^{\frac{11}{8}} , 	
	\, \, \,  \mbox{$n\leq q+1$} .
	\end{equation}
	Such an assumption will be recovered by the convergence analysis at time step $t^{q+2}$. Based on this \textit{a-priori} assumption, we see that~\eqref{a priori-0} is satisfied, so that we are able to apply Lemma~\ref{lem 6-0} and the estimate~\eqref{lem 6-2-2} to get	
	\begin{align} 
	\| \nabla_h \e_h^n \|_2 \le \frac54 \| \nabla_h \tilde{\e}_h^n \|_2 
	+ \mathcal{C} \| \tilde{\e}_h^n \|_2  
	\le \frac58 \dtk^{\frac{11}{8}}  + {\mathcal C} \dtk^{\frac{15}{8}} 
	\le \dtk^{\frac{11}{8}} ,  \, \, \,  \mbox{$n\leq q+1$} .	 
	 \label{bound-3-1}
	\end{align}
In turn, an application of inverse inequality implies the $\| \cdot \|_\infty$ and $\| \cdot \|_{W_h^{1,4}}$ bounds of the numerical error function $\e_h^n$ ($n\leq q+1$): 
	\begin{equation} 
\begin{aligned} 
    \| \e_h^n \|_\infty & \le \frac{C \| \e_h^n \|_2}{h^\frac32} 
   \le \frac{C \cdot \dtk^{\frac{31}{16}}}{h^\frac32}  \le C \dtk^{\frac{7}{16}} \le \dtk^\frac38 ,  
\\
   \| \nabla_h \e_h^n \|_4 & \le \frac{C \| \nabla_h \e_h^n \|_2}{h^\frac34} 
   \le \frac{C \cdot \dtk^{\frac{11}{8}}}{h^\frac34}  \le C \dtk^{\frac{5}{8}} \le \dtk^\frac12 \le \frac13 .      
\end{aligned} 
     \label{bound-4} 
     	\end{equation} 
Subsequently, the triangle inequality yields the desired $W_h^{1,4}$ bound for the numerical solutions $\m_h^n$ and $\tilde{\m}_h^n$ ($n\leq q+1$):
	\begin{align}
	%\|{\m}_h^k \|_{\infty} &= \|\um_h^k -{\e}_h^k \|_{\infty}\leq\|\um_h^k \|_{\infty}+\|{\e}_h^k \|_{\infty}\leq 1 + \dtk^\frac38 ,  \nonumber \\
	\|\nabla_h{\m}_h^n \|_4 &= \|\nabla_h\um_h^n -\nabla_h{\e}_h^n \|_4 \leq 
	\|\nabla_h\um_h^n \|_4 +\|\nabla_h{\e}_h^n \|_4 
	\leq \mathcal{C}+\frac13 .  \label{bound-3}  			
	\end{align}
Furthermore, we need a sharper $\| \cdot \|_\infty$ bound for $\hat{\m}_h^{n+2} = 2 \m_h^{n+1} - \m_h^n$, which will be needed in the later error analysis. The following extrapolation estimate is valid, due to the $C^3 ([0,T]; [C^0(\Omega)]^3)$ regularity of the exact solution $\m(\cdot,t)$: 
\begin{equation} 
  \um_h^{n+2} = 2 \um_h^{n+1} - \um_h^n + {\mathcal O} (\dtk^2) .  
  \label{bound-5-1} 
\end{equation} 
Meanwhile, since $| \m(\x,t) | \equiv 1$ ($\x\in\Omega$), we conclude that 
\begin{equation} 
  \| 2 \um_h^{n+1} - \um_h^n \|_\infty  \le 1 + {\mathcal C} \dtk^2 ,\quad n\leq q+1.  \label{bound-5-2} 
\end{equation}  
Its combination with the \textit{a-priori} assumption that $\|{\e}_h^n \|_{\infty} \le \dtk + h$, for $n\leq q+1$, (as given by~\eqref{bound-2}), implies that 
\begin{equation} 
\begin{aligned} 
  \| \hat{\m}_h^{n+2} \|_\infty = & \| 2 \m_h^{n+1} - \m_h^n \|_\infty  
  \le \| 2 \um_h^{n+1} - \um_h^n \|_\infty  + \| 2 \e_h^{n+1} - \e_h^n \|_\infty    
\\
  \le & 1 + {\mathcal C} \dtk^2 + 3 \dtk^\frac38  
  \le \alpha_1 := \Big( \frac{3 + \alpha}{6} \Big)^\frac12 , 
\end{aligned} 
\label{bound-5-3}  
\end{equation}   
provided that $\dtk$ and $h$ are sufficiently small. In addition, we denote $\gamma_0 := \alpha-3 >0$, so that $\alpha_1^2 = 1 + \frac16 \gamma_0$. 
	
	%	The assumption taken is:
	%	\begin{equation}\label{ccc29}
	%	\|{\e}_h^\ell\|_{2}+\|\nabla_h{\e}_h^\ell\|_{2} \leq \mathcal{C}(k^2+h^2),\,\ell=0,1.
	%	\end{equation}
	Next, we perform a discrete $\ell^2$ error estimate at $t^{q+2}$ using the mathematical induction. %The trick of this proof is to take an inner product by $-\Delta_h\tilde{\e}_h^{\ell+2}$ and $\tilde{\e}_h^{\ell+2}$, respectively and then apply the Gronwall \cref{ccclem1}. Accordingly,
	Taking a discrete inner product with the numerical error equation \cref{ccc73} by $\tilde{\e}_h^{n+2}\in\X$  ($n\leq q+1$) gives that
	\begin{align}\label{rhs}
	\begin{aligned}
R.H.S.=& \left\langle-\left( 2{\m}_h^{n+1}-{\m}_h^n\right)\times \Delta_h \hat{\e}_h^{n+2}, \tilde{\e}_h^{n+2} \right\rangle\\
&-\left\langle\left( 2{\e}_h^{n+1}-{\e}_h^n\right)\times\Delta_h \hat{\underline{\m}}_h^{n+2}, \tilde{\e}_h^{n+2}\right\rangle + \left\langle \tau^{n+2}, \tilde{\e}_h^{n+2} \right\rangle 
\\
& - \alpha \| \nabla_h \tilde{\e}_h^{n+2} \|_2^2 
+ \alpha \langle | {\mathcal A}_h \nabla_h \hat{\underline{\m}}_h^{n+2} |^2 
\hat{\e}_h^{n+2} , \tilde{\e}_h^{n+2} \rangle  \\
& + \alpha \Big\langle \Big( 
{\mathcal A}_h \nabla_h  ( \hat{\underline{\m}}_h^{n+2} + \hat{\m}_h^{n+2} ) 
\cdot {\mathcal A}_h \nabla_h  \hat{\e}_h^{n+2}  \Big) 		 
\hat{\m}_h^{n+2} ,  \tilde{\e}_h^{n+2} \Big\rangle   \\
=:& \tilde{I}_1+\tilde{I}_2+\tilde{I}_3 - \alpha \| \nabla_h \tilde{\e}_h^{n+2} \|_2^2  
+\tilde{I}_5+\tilde{I}_6.
	\end{aligned}
	\end{align}
Then all the terms are accordingly analyzed. For the term $\tilde{I}_1$, a combination of the summation by parts formula by and Cauchy inequality  results in 	
		\begin{align}\label{I1-1}
		\begin{aligned}
		\tilde{I}_1 =& \left\langle-\hat{\m}_h^{n+2}\times\Delta_h \hat{\e}_h^{n+2},\tilde{\e}_h^{n+2} \right\rangle
	=  \left\langle \tilde{\e}_h^{n+2}\times\hat{\m}_h^{n+2} , -\Delta_h \hat{\e}_h^{n+2}\right\rangle \\
	=&  \left\langle \nabla_h\Big[\tilde{\e}_h^{n+2}\times\hat{\m}_h^{n+2}\Big] , \nabla_h \hat{\e}_h^{n+2} \right\rangle 
	\le  \frac32 \Big\| \nabla_h \Big( \tilde{\e}_h^{n+2} 
	\times \hat{\m}_h^{n+2}  \Big) \Big\|_2^2 
	+ \frac16 \| \nabla_h \hat{\e}_h^{n+2} \|_2^2 . 
		\end{aligned}
		\end{align}
Meanwhile, an application of the cross product gradient estimate~\eqref{lem27_2} implies that, for any $\delta>0$, the following inequality is valid: 
\begin{equation} 
	\begin{aligned}	
	  & 
	\Big\| \nabla_h \Big( \tilde{\e}_h^{n+2} 
 		\times \hat{\m}_h^{n+2}  \Big) \Big\|_2^2 
\\
     \le  & 
		( 1 + \delta) \| \hat{\m}_h^{n+2} \|_{\infty}^2 
		\cdot \|\nabla_h \tilde{\e}_h^{n+2}  \|_2^2 
		+ {\mathcal C}_\delta  \| \tilde{\e}_h^{n+2}  \|_4^2 
		\cdot \|\nabla_h \hat{\m}_h^{n+2} \|_4^2 
\\
   \le  & 
		( 1 + \delta) \alpha_1^2  \|\nabla_h \tilde{\e}_h^{n+2}  \|_2^2 
		+ {\mathcal C}_\delta  \| \tilde{\e}_h^{n+2}  \|_4^2 ,      
\end{aligned} 
  \label{I1-2} 
	\end{equation} 
 where $\mathcal{C}_\delta$ is a positive constant dependent on $\delta$, in which the \textit{a-priori} bound estimates~\eqref{bound-4} and \eqref{bound-5-3} have been applied.   The term $\| \nabla_h \hat{\e}_h^{n+2} \|_2^2$ can be analyzed as follows: 
\begin{equation} 
\begin{aligned} 
    \| \nabla_h \hat{\e}_h^{n+2} \|_2^2 = & 
    \| \nabla_h ( 2 \e_h^{n+1} - \e_h^n ) \|_2^2 
    \le  6 \| \nabla_h \e_h^{n+1} \|_2^2 + 3 \| \nabla_h \e_h^n \|_2^2 
\\
%   \le & 
%    6 \Big( ( 1 + \delta) \| \nabla_h \tilde{\e}_h^{n+1} \|_2^2 
%       + {\mathcal C}_\delta \| \tilde{\e}_h^{n+1} \|_2^2  \Big)   
%\\
%  & 
%    + 3 \Big( ( 1 + \delta) \| \nabla_h \tilde{\e}_h^n \|_2^2 
%       + {\mathcal C}_\delta \| \tilde{\e}_h^n \|_2^2  \Big)   
%\\
   \le & 
    ( 1 + \delta)  ( 6 \| \nabla_h \tilde{\e}_h^{n+1} \|_2^2 
      + 3 \| \nabla_h \tilde{\e}_h^n \|_2^2  ) 
       + {\mathcal C}_\delta ( \| \tilde{\e}_h^{n+1} \|_2^2  
       + \| \tilde{\e}_h^n\|_2^2 )   ,  
\end{aligned} 
  \label{I1-3} 
\end{equation}     
in which the estimate~\eqref{lem 6-2-2} (in Lemma~\ref{lem 6-0}) has been repeatedly applied, due to the \textit{a-priori} assumption~\eqref{bound-2}. Combining ~\eqref{I1-2}, \eqref{I1-3} and \eqref{I1-1},  we get
		\begin{align}\label{I1}
		\begin{aligned}
	\tilde{I}_1 
	\le \, & \frac32 \Big\| \nabla_h \Big( \tilde{\e}_h^{n+2} 
	\times \hat{\m}_h^{n+2}  \Big) \Big\|_2^2 
	+ \frac16 \| \nabla_h \hat{\e}_h^{n+2} \|_2^2  \\ 	
	\le \, & ( 1 + \delta)  \Big( \frac32 \alpha_1^2  
	\|\nabla_h \tilde{\e}_h^{n+2}  \|_2^2 
	+  \| \nabla_h \tilde{\e}_h^{n+1} \|_2^2 
	+ \frac12 \| \nabla_h \tilde{\e}_h^n \|_2^2 \Big) \\		 
	&  + {\mathcal C}_\delta  ( \| \tilde{\e}_h^{n+2}  \|_4^2 		
	+ \| \tilde{\e}_h^{n+1} \|_2^2  
	+ \| \tilde{\e}_h^n \|_2^2 )  . 
		\end{aligned}
		\end{align}

For the term $\tilde{I}_2$, by the preliminary estimate~\eqref{bound-1} for the exact solution, we have
		\begin{align}\label{I2}
		\begin{aligned}
		\tilde{I}_2 =\, & -\left\langle\hat{\e}_h^{n+2}\times \Delta_h \hat{\um}_h^{n+2}, \tilde{\e}_h^{n+2}\right\rangle 		\le \, \frac{1}{2}\big[\|\tilde{\e}_h^{n+2}\|_2^2+\|\hat{\e}_h^{n+2}\|_2^2\cdot \|\Delta_h \hat{\um}_h^{n+2}\|_{\infty}^2 \big]\\
	\le \, & \mathcal{C} ( \|\tilde{\e}_h^{n+2}\|_2^2 + \| \e_h^{n+1}\|_2^2
	+ \| \e_h^{n}\|_2^2 ). 
		\end{aligned}
		\end{align}
%in which the bound for $\|\Delta_h \hat{\um}_h^{\ell+2}\|_{\infty}$ is given  with $r=2$.   

	For the term $\tilde{I}_3$, an application of Cauchy inequality gives
		\begin{align}\label{I3}
		\tilde{I}_3 = \left\langle\tau^{n+2},\tilde{\e}_h^{n+2} \right\rangle \leq \mathcal{C}\|\tilde{\e}_h^{n+2}\|_2^2+\mathcal{C}(\dtk^4+h^4).
		\end{align}
			
In terms of $\tilde{I}_5$, based on the $W_h^{1, \infty}$ bound~\eqref{bound-1} for the exact solution, an application of discrete H\"older inequality gives 
          	\begin{align}\label{I5-1}
 		\Big\| | {\mathcal A}_h \nabla_h \hat{\underline{\m}}_h^{n+2} |^2 
 	\hat{\e}_h^{n+2} \Big\|_2 
 		\le \, & \| \nabla_h \hat{\underline{\m}}_h^{n+2} \|_\infty^2 
 		\cdot  \| \hat{\e}_h^{n+2} \|_2   \le   {\mathcal C}  \| \hat{\e}_h^{n+2} \|_2 , 
         	\end{align}		
and		
 	\begin{align}\label{I5}
 	\begin{aligned}
 \tilde{I}_5 = \,& \alpha \langle | {\mathcal A}_h \nabla_h \hat{\underline{\m}}_h^{n+2} |^2 
 \hat{\e}_h^{n+2} , \tilde{\e}_h^{n+2} \rangle \le  \alpha \Big\| | {\mathcal A}_h \nabla_h \hat{\underline{\m}}_h^{n+2} |^2 
 \hat{\e}_h^{n+2} \Big\|_2 \cdot \| \tilde{\e}_h^{n+2}	 \|_2  \\
 \le \, &  {\mathcal C} \alpha  \| \hat{\e}_h^{n+2} \|_2  
 \cdot \| \tilde{\e}_h^{n+2} \|_2 
 \le {\mathcal C} ( \| \e_h^{n+1}\|_2^2 + \| \e_h^{n} \|_2^2		
 + \|\tilde{\e}_h^{n+2}\|_2^2 ) . 
 	\end{aligned}
 	\end{align}
	
For the term $\tilde{I}_6$, an application of discrete H\"older inequality gives 		
		\begin{align} \label{I6}
		\begin{aligned}
	\tilde{I}_6 = \,& \alpha \Big\langle \Big( 
	{\mathcal A}_h \nabla_h  ( \hat{\underline{\m}}_h^{n+2} + \hat{\m}_h^{n+2} ) 
	\cdot {\mathcal A}_h \nabla_h  \hat{\e}_h^{n+2}  \Big) 		 
	\hat{\m}_h^{n+2} ,  \tilde{\e}_h^{n+2} \Big\rangle \\
	\le \, & \alpha \Big(  \| \nabla_h  \hat{\underline{\m}}_h^{n+2} \|_4  
	+ \| \nabla_h \hat{\m}_h^{n+2} \|_4 \Big) \cdot  
	\| \nabla_h  \hat{\e}_h^{n+2} \|_2  \cdot \| \hat{\m}_h^{n+2} \|_\infty 
	\cdot 	\| \tilde{\e}_h^{n+2}	 \|_4  \\ 
	\le \, & {\mathcal C} \alpha \| \nabla_h  \hat{\e}_h^{n+2} \|_2 
	\cdot 	 \| \tilde{\e}_h^{n+2}  \|_4                 
	\le \mathcal{C} \alpha^2 \gamma_0^{-1}  \|\tilde{\e}_h^{n+2}\|_4^2 
	+ \frac{\gamma_0}{72}  \| \nabla_h \hat{\e}_h^{n+2}\|_2^2   \\
	\le \, & \mathcal{C}  \|\tilde{\e}_h^{n+2}\|_4^2 
	+ \frac{( 1+ \delta) \gamma_0}{24}
	( 2 \| \nabla_h \tilde{\e}_h^{n+1} \|_2^2 
	+   \| \nabla_h \tilde{\e}_h^n \|_2^2  ) 
	+ {\mathcal C}_\delta ( \| \tilde{\e}_h^{n+1} \|_2^2  
	+ \| \tilde{\e}_h^n \|_2^2 ) ,  
		\end{aligned}	
		\end{align}
in which the $W_h^{1, 4}$ bounds~\eqref{bound-1}, \eqref{bound-3}, for the exact and numerical solutions, as well as the preliminary error estimate~\eqref{I1-3}, have been applied. 	
	On the other hand, the inner product of the left hand side of \cref{ccc73} with $\tilde{\e}_h^{n+2}$ turns out to be	
	\begin{align*}
	L.H.S. =& \frac{1}{4 \dtk}\big(\|\tilde{\e}_h^{n+2}\|_2^2-\| \e_h^{n+1}\|_2^2 
	+ \| 2 \tilde{\e}_h^{n+2} - \e_h^{n+1} \|_2^2 
	- \| 2 \e_h^{n+1} - \e_h^n \|_2^2 \\
	&+\| \tilde{\e}_h^{n+2} - 2 \e_h^{n+1} + \e_h^n\|_2^2 \big). \nonumber
	\end{align*}
	Its combination with \cref{I1,I2,I3,I5,I6} and \cref{rhs} leads to
	\begin{align}\label{convergence-1}
	\begin{aligned}
	&  \frac{1}{4 \dtk}(\|\tilde{\e}_h^{n+2}\|_2^2-\| \e_h^{n+1}\|_2^2 
	+ \| 2 \tilde{\e}_h^{n+2} - \e_h^{n+1} \|_2^2 
	- \| 2 \e_h^{n+1} - \e_h^n \|_2^2 ) \\ 
	&+  \alpha_2		 \|\nabla_h \tilde{\e}_h^{n+2}  \|_2^2 
	- ( 1 + \delta) ( 1 + \frac{\gamma_0}{12})  \Big ( 
	\| \nabla_h \tilde{\e}_h^{n+1} \|_2^2 
	+ \frac12 \| \nabla_h \tilde{\e}_h^n \|_2^2 \Big)   \\       
	&\le  \mathcal{C}_\delta \Big( \sum\limits_{p=n}^{n+2}	 
	\|\tilde{\e}_h^{p}\|_2^2  + \| \e_h^{n+1}\|_2^2 + \| \e_h^{n}\|_2^2 
	+ \| \tilde{\e}_h^{n+2}\|_4^2 \Big) 
	+ \mathcal{C} (\dtk^4+h^4), 
	\end{aligned}
	\end{align}
	where $\alpha_2=( \alpha - \frac32 \alpha_1^2 ( 1 + \delta) )$.
Meanwhile, for the $\| \cdot \|_4$ error estimate for $\tilde{\e}_h^{n+2}$, an application of the discrete Sobolev inequality~\eqref{Sobolev-1} (in Lemma~\ref{lem: Sobolev-1}) gives 
\begin{equation} 
	\begin{aligned} 
	{\mathcal C}_\delta \| \tilde{\e}_h^{n+2} \|_4^2 
	\le  & \mathcal{C}_\delta  ( \| \tilde{\e}_h^{n+2} \|_2^2  	
	 +   	\| \tilde{\e}_h^{n+2} \|_2^\frac12 
	 \cdot \| \nabla_h \tilde{\e}_h^{n+2} \|_2^\frac32 ) 
\\
	 \le  & \mathcal{C}_\delta  \| \tilde{\e}_h^{n+2} \|_2^2  	
	 +   	\frac{\gamma_0}{12} \| \nabla_h \tilde{\e}_h^{n+2} \|_2^2 ,  
	\end{aligned} 
	\label{convergence-1-2} 	
\end{equation} 
in which the Young's inequality has been applied. Then we get 
\begin{equation} 
	\begin{aligned}
	&  \frac{1}{4 \dtk}\big(\|\tilde{\e}_h^{n+2}\|_2^2-\| \e_h^{n+1}\|_2^2 
	+ \| 2 \tilde{\e}_h^{n+2} - \e_h^{n+1} \|_2^2 
	- \| 2 \e_h^{n+1} - \e_h^n\|_2^2 ) \\ 
           & + \Big( \alpha_2 - \frac{\gamma_0}{12} \Big)   
		 \|\nabla_h \tilde{\e}_h^{n+2}  \|_2^2   - ( 1 + \delta) ( 1 + \frac{\gamma_0}{12})  \Big ( 
		    \| \nabla_h \tilde{\e}_h^{n+1} \|_2^2 
                  + \frac12 \| \nabla_h \tilde{\e}_h^n\|_2^2 \Big)   \\       
	\le \, & \mathcal{C}_\delta \Big( 	 
	\|\tilde{\e}_h^{n+2}\|_2^2 + \|\tilde{\e}_h^{n+1} \|_2^2 
	+ \|\tilde{\e}_h^n\|_2^2 + \| \e_h^{n+1}\|_2^2 + \| \e_h^{n}\|_2^2 \Big) 
	+ \mathcal{C} (\dtk^4+h^4) 	. 
	\end{aligned} 
	\label{convergence-2} 
\end{equation}  
In particular, we observe that 
\begin{equation} 
\begin{aligned} 
  & 
  \big( \alpha - \frac32 \alpha_1^2 ( 1 + \delta) - \frac{\gamma_0}{12} \Big)   
              - \frac32 ( 1 + \delta) ( 1 + \frac{\gamma_0}{12})	
\\
              \ge  &  \alpha - 3 \alpha_1^2 ( 1 + \delta) 
       \ge  
          ( 3 + \gamma_0 ) - 3 ( 1 + \frac16 \gamma_0 ) ( 1 + \delta)   
          \ge \frac14 \gamma_0 , 
\end{aligned} 
  \label{condition-alpha-1} 
\end{equation}  
if $\delta >0$ is chosen with $( 1 + \frac16 \gamma_0 ) ( 1 + \delta)  \le 1 + \frac14 \gamma_0$. 

Moreover, an application of the \textit{a-priori} assumption~\eqref{bound-2} into~\eqref{convergence-2} yields  
\begin{equation} 
	\begin{aligned}
	&  ( \frac{1}{4 \dtk} - {\mathcal C}_\delta ) \|\tilde{\e}_h^{n+2}\|_2^2 
           + \Big( \frac32 ( 1 + \delta) ( 1 + \frac{\gamma_0}{12}) + \frac{\gamma_0}{4} \Big) 
		 \|\nabla_h \tilde{\e}_h^{n+2}  \|_2^2  
\\
           \le \, &  ( 1 + \delta) ( 1 + \frac{\gamma_0}{12})  \Big ( 
		    \| \nabla_h \tilde{\e}_h^{n+1} \|_2^2 
                  + \frac12 \| \nabla_h \tilde{\e}_h^n \|_2^2 \Big)         
	         +  \frac{1}{4 \dtk} \big(  \| \e_h^{n+1}\|_2^2 
	 + \| 2 \hat{\e}_h^{n+2}  \|_2^2 ) 
\\	
	 & + \mathcal{C}_\delta \Big( 	 
	   \|\tilde{\e}_h^{n+1} \|_2^2 
	+ \|\tilde{\e}_h^n\|_2^2 + \| \e_h^{n+1}\|_2^2 + \| \e_h^{n}\|_2^2 \Big) 
	+ \mathcal{C} (\dtk^4+h^4) 	
\\
      \le & 
        \frac32 ( 1 + \delta) ( 1 + \frac{\gamma_0}{12})  \cdot \frac14 \dtk^{\frac{11}{4}} 
        + {\mathcal C} ( \dtk^{\frac{23}{8}} + \dtk^{\frac{15}{4}} + \dtk^4 + h^4 )      
\\
      \le & 
        \Big( \frac32 ( 1 + \delta) ( 1 + \frac{\gamma_0}{12}) + \frac{\gamma_0}{8} \Big) 
        \cdot \frac14 \dtk^{\frac{11}{4}} ,         
	\end{aligned} 
	\label{convergence-3} 
\end{equation}  
provided that $\dtk$ and $h$ are sufficiently small, and under the linear refinement requirement, $C_1 h \le \dtk \le C_2 h$. As a matter of fact, we can choose $\gamma_0 >0$ and $\delta >0$ and make $\dtk$ sufficiently small so that 
\begin{equation}
   \frac32 ( 1 + \delta) ( 1 + \frac{\gamma_0}{12}) + \frac{\gamma_0}{8} \le 2 , \quad 
   \frac{1}{4 \dtk} - {\mathcal C}_\delta \ge \frac{1}{6 \dtk} .
     \label{convergence-3-2} 
\end{equation}          
As a result, by~\eqref{convergence-3}, we arrive at 
\begin{equation} 
	\begin{aligned}
	&  \frac{1}{6 \dtk} \|\tilde{\e}_h^{q+2}\|_2^2  \le  \frac12 \dtk^{\frac{11}{4}} , \quad
		 \|\nabla_h \tilde{\e}_h^{q+2}  \|_2^2  \le \frac14 \dtk^{\frac{11}{4}}, 
	\\
            &  \mbox{i.e.},  \quad 
           \|\tilde{\e}_h^{q+2}\|_2  \le  \sqrt{3} \dtk^{\frac{15}{8}}   \le 2  \dtk^{\frac{15}{8}} , 
           \quad   \|\nabla_h \tilde{\e}_h^{q+2}  \|_2  \le \frac12 \dtk^{\frac{11}{8}}  ,         
	\end{aligned} 
	\label{convergence-3-3} 
\end{equation}  
so that the \textit{a-priori} assumption is valid for $\tilde{\m}^{q+2}$. 

With the recovery of the \textit{a-priori} estimate~\eqref{convergence-3-3} at time step $t^{q+2}$,  we are able to apply estimates~\eqref{lem 6-2}, \eqref{lem 7-1} (in Lemmas~\ref{lem 6-0} and \ref{lem 7-0}), respectively: 
	\begin{align} 
	  & 
	 \| \tilde{\e}_h^{n+2} \|_2^2 \ge ( 1 - \dtk^\frac54 ) \| \e_h^{n+2} \|_2^2  
	 + ( 1 - \dtk^\frac14 ) \| \tilde{\e}_h^{n+2} - \e_h^{n+2} \|_2^2 ,   
	  \label{convergence-4-1}	
\\
          & 
	  \Big| \langle \tilde{\e}_h^{n+2} - \e_h^{n+2} ,  \e_h^{p} \rangle \Big| 	  	  
	  \le \dtk^\frac54  \| \e_h^{p} \|_2^2  
	 + \dtk^\frac14  \| \tilde{\e}_h^{n+2} - \e_h^{n+2} \|_2^2 , \, \, 
	 p = n+1, n+2.    \label{convergence-4-2}	
	\end{align}
Moreover, the following inequality becomes available for $n\leq q$: 
\begin{equation} 
\begin{aligned} 
    \| 2 \tilde{\e}_h^{n+2} - \e_h^{n+1} \|_2^2 
    = & \| \hat{ \e}_h^{n+3} \|_2^2    
    + 4 \langle  \tilde{\e}_h^{n+2} - \e_h^{n+2} ,  
      \hat{\e}_h^{n+3}   \rangle   
      + 4 \| \tilde{\e}_h^{n+2} - \e_h^{n+2} \|_2^2       
\\
  \ge & 
  \| 2 \e_h^{n+2} - \e_h^{n+1} \|_2^2    
    - 8 \dtk^\frac54  \| \e_h^{n+2} \|_2^2  
	 - 8 \dtk^\frac14  \| \tilde{\e}_h^{n+2} - \e_h^{n+2} \|_2^2  
\\
  &   
    - 4 \dtk^\frac54  \| \e_h^{n+1} \|_2^2  
    - 4 \dtk^\frac14  \| \tilde{\e}_h^{n+2} - \e_h^{n+2} \|_2^2  
    + 4 \| \tilde{\e}_h^{n+2} - \e_h^{n+2} \|_2^2     
\\
  \ge & 
  \| 2 \e_h^{n+2} - \e_h^{n+1} \|_2^2    
    - \dtk (  \| \e_h^{n+2} \|_2^2  +  \| \e_h^{n+1} \|_2^2  ) 
\\
  &
    + ( 4 - 12 \dtk^\frac14 ) \| \tilde{\e}_h^{n+2} - \e_h^{n+2} \|_2^2  .    
\end{aligned} 
  \label{convergence-4-3}  
\end{equation} 
Going back~\eqref{convergence-2}, we arrive at the following estimate, for $n\leq q$: 
\begin{equation} 
	\begin{aligned}
	&  \frac{1}{4 \dtk}\big(\| \e_h^{n+2}\|_2^2-\| \e_h^{n+1}\|_2^2 
	+ \| 2 \e_h^{n+2} - \e_h^{n+1} \|_2^2 
	- \| 2 \e_h^{n+1} - \e_h^n \|_2^2	) \\ 
           & + \frac{ 5 - 13 \dtk^\frac14 }{4 \dtk} \| \tilde{\e}_h^{n+2} - \e_h^{n+2} \|_2^2           
           + \Big(\alpha_2 - \frac{\gamma_0}{12} \Big)   
		 \|\nabla_h \tilde{\e}_h^{n+2}  \|_2^2  
\\
            & - ( 1 + \delta) ( 1 + \frac{\gamma_0}{12})  \Big ( 
		    \| \nabla_h \tilde{\e}_h^{n+1} \|_2^2 
                  + \frac12 \| \nabla_h \tilde{\e}_h^n \|_2^2 \Big)   \\       
	\le \, & \mathcal{C}_\delta\sum_{p=n}^{n+2} 	 
	(\|\tilde{\e}_h^{p}\|_2^2 + \| \e_h^{p}\|_2^2 \Big) 
	+ \mathcal{C} (\dtk^4+h^4) 	. 
	\end{aligned} 
	\label{convergence-5-1} 
\end{equation}         
Meanwhile, for the terms $\|\tilde{\e}_h^p \|_2^2$, $p= n, n+1, n+2$, an application of Cauchy inequality gives 
\begin{equation} 
   \|\tilde{\e}_h^p \|_2^2 \le 2 ( \| \e_h^p\|_2^2 + \| \tilde{\e}_h^p - \e_h^p \|_2^2 ) , \quad 
   p= n, n+1, n+2 . \label{convergence-5-2} 
\end{equation}        
Its substitution into~\eqref{convergence-5-1} leads to the following inequality for $n\leq q$: 
\begin{equation} 
	\begin{aligned}
	&  \frac{1}{4 \dtk}\big(\| \e_h^{n+2}\|_2^2-\| \e_h^{n+1}\|_2^2 
	+ \| 2 \e_h^{n+2} - \e_h^{n+1} \|_2^2 
	- \| 2 \e_h^{n+1} - \e_h^n \|_2^2	) \\ 
           & + \frac{1}{4 \dtk} \| \tilde{\e}_h^{n+2} - \e_h^{n+2} \|_2^2  
               - {\mathcal C}_\delta (  \| \tilde{\e}_h^{n+1} - \e_h^{n+1} \|_2^2 
               + \| \tilde{\e}_h^{n} - \e_h^{n} \|_2^2 ) 
\\                                     
           & + \Big( \alpha_2 - \frac{\gamma_0}{12} \Big)   
		 \|\nabla_h \tilde{\e}_h^{n+2}  \|_2^2   - ( 1 + \delta) ( 1 + \frac{\gamma_0}{12})  \Big ( 
		    \| \nabla_h \tilde{\e}_h^{n+1} \|_2^2 
                  + \frac12 \| \nabla_h \tilde{\e}_h^n\|_2^2 \Big)   \\       
	\le \, & \mathcal{C}_\delta \Big( 	 
	 \| \e_h^{n+2}\|_2^2 
	+ \| \e_h^{n+1}\|_2^2 + \| \e_h^{n}\|_2^2 \Big) 
	+ \mathcal{C} (\dtk^4+h^4) 	. 
	\end{aligned} 
	\label{convergence-5-3} 
\end{equation}         

	In turn, an application of discrete Gronwall inequality \cite{Girault1986}, combined with the fact~\eqref{condition-alpha-1}, yields the desired convergence estimate at $t^{q+2}$ :
	%	\begin{align}\label{ccc27}
	%		&\|\nabla_h\tilde{\e}_h^{n}\|_2^2-\|\nabla_h\tilde{\e}_h^{1}\|_2^2+\|2\nabla_h\tilde{\e}_h^{n}-\nabla_h\tilde{\e}_h^{n-1}\|_2^2-\|2\nabla_h\tilde{\e}_h^{1}-\nabla_h\tilde{\e}_h^0\|_2^2\\
	%		&\leq \mathcal{C}k \sum_{\ell=1}^{n-1}\|\nabla_h\tilde{\e}_h^{\ell}\|_2^2+\mathcal{C}n k(k^4+h^4).\nonumber
	%	\end{align}
	%	Note that $\|\nabla_h\tilde{\e}_h^\ell\|_2=0$, where $\ell=0,1$, we then get
	%	\begin{align}\label{ccc30}
	%		\|\nabla_h\tilde{\e}_h^n\|_2^2\leq \mathcal{C}k\sum_{\ell=1}^{n-1}\|\nabla_h\tilde{\e}_h^{\ell}\|_2^2+\mathcal{C}nk(k^4+h^4).
	%	\end{align}
\begin{equation} 
\begin{aligned} 
  &
	\|  \e_h^n\|_2^2 
	+  \gamma_0 \dtk \sum_{p=0}^n \| \nabla_h  \tilde{\e}_h^{p} \|_2^2 \leq \mathcal{C}Te^{\mathcal{C}T} (\dtk^4+h^4), \quad \forall \, n\leq q+2\leq \left\lfloor\frac{T}{k}\right\rfloor , 
\\
  & 
	\mbox{i.e.,} \quad 
	\| \e_h^n \|_2 
	+ \Big( \gamma_0 \dtk \sum_{p=0}^n \| \nabla_h  \tilde{\e}_h^{p} \|_2^2\Big)^\frac12
	\le \mathcal{C} (\dtk^2+h^2) .
	\end{aligned} 
	\label{convergence-6-1} 
\end{equation} 
The convergence estimate~\eqref{convergence-0} has been proved at $t^{q+2}$. In addition, we see that the \textit{a-priori} assumption~\eqref{bound-2} has also been validated at the next time step $t^{q+2}$, provided that $\dtk$ and $h$ are sufficiently small. By mathematical induction, this completes the proof of Theorem~\ref{cccthm2}. $\hfill\Box$
%\end{proof}

%\begin{remark} 
%The regularity assumption for the exact solution, namely $\m_e \in C^3 ([0,T]; C^0) \cap C^2([0,T]; C^2) \cap L^{\infty}([0,T]; C^5)$, as stated in Theorem~\ref{cccthm2}, is very strong. In fact, a global-in-time weak solution of the LL equation~\cref{c1} is only of regularity class $L^{\infty}([0,T]; H^1) \cap L^2([0,T]; H^3)$. Of course, if the initial data is smooth enough, one could always derive a local-in-time exact solution with higher enough regularity estimate, so that the convergence estimate established in Theorem~\ref{cccthm2} could pass through. In other words, the optimal rate error estimate~\eqref{convergence-0} stands for a local-in-time theoretical result. In addition, since the finite difference numerical method is evaluated at the collocation grid points, instead of the ones based on a weak formulation, it usually requires higher order regularity requirement for the exact solution in the optimal rate convergence estimate than that of the finite element approach; see the related finite difference analysis for various gradient flows~\cite{baskaran13b, wang11a, wise09a}, etc.    
%\end{remark} 

\begin{rema} 
The condition $\alpha > 3$ is a relatively strong constraint. In fact, such a condition is used in the estimate~\eqref{I1} for $\tilde{I}_1$, since we need $\alpha > 3$ to control these Laplace terms, due to the explicit treatment of $\Delta_h \hat{{\m}}_h^{n+2}$. Meanwhile, such an inequality only stands for a theoretical difficulty, and the practical computations may not need that large value of $\alpha$. In most practical simulation examples, a value of $\alpha > 1$ would be sufficient to ensure the numerical stability of the proposed numerical scheme~\cref{scheme-1-1}-\cref{scheme-1-2}. 

In addition, the explicit treatment of the Laplace term, namely $\Delta_h \hat{\m}_h^{n+2} = 
\Delta_h ( 2 \m_h^{n+1} - \m_h^n )$, will greatly improve the numerical efficiency, since only a constant-coefficient Poisson solver is needed at each step. This crucial fact enables one to produce very robust simulation results at a much-reduced computational cost.  
\end{rema}

\begin{rema} 
In a recent work~\cite{Xie2021BDF2Linear}, a rough stability estimate for the projection step, namely, $\| \e_h \|_2 \le 2   \| \tilde{\e}_h \|_2 + {\mathcal O} (h^2)$, $\| \nabla_h \e_h \|_2 \le {\mathcal C} ( \| \e_h \|_2 + \| \nabla_h \tilde{\e}_h \|_2 ) + {\mathcal O} (h^2)$, has been proved. These inequalities are sufficient to establish the stability and convergence analysis for a semi-implicit numerical scheme, if the BDF2 temporal stencil is formulated as $\frac{1}{k} ( \frac32 \tilde{\m}_h^{n+2} - 2 \tilde{\m}_h^{n+1} + \frac12 \tilde{\m}_h^n)$. However, for the BDF2 temporal stencil formulated as $\frac{1}{k} ( \frac32 \tilde{\m}_h^{n+2} - 2 \m_h^{n+1} + \frac12 \m_h^n)$ (as given by {\bf Algorithm 2.2}), such a stability estimate are not sufficient to derive the stability and convergence analysis, due to the singular coefficient $\frac{1}{k}$. Instead, a much more refined stability estimate, as given by \eqref{lem 6-2}, \eqref{lem 6-2-2} (in Lemma~\ref{lem 6-0}), is needed to pass through the convergence analysis. The proof of these two refined inequalities has to be based on a more precise geometric analysis of the corresponding vectors, and the details will be presented in Appendix A. 

Extensive numerical experiments have demonstrated a better stability property of the temporal stencil in {\bf Algorithm 2.2} than that of {\bf Algorithm 2.1}, for physical models with large damping parameters. For the theoretical analysis of {\bf Algorithm 2.2}, the refined stability estimate \eqref{lem 6-2}, \eqref{lem 6-2-2} has played a crucial role. 
\end{rema}

\section{Conclusions}
\label{sec:conclusions}

In this paper, we have presented an optimal rate convergence analysis and error estimate for a second-order accurate, linear numerical scheme to the LL equation. The second-order backward differentiation formula is applied in the temporal discretization, the linear diffusion term is treated implicitly, while the nonlinear terms are updated by a fully explicit extrapolation formula. Afterward, a point-wise projection is applied to normalize the magnetization vector. In turn, only a linear system  independent of the updated magnetization needs to be solved at each time step, which has greatly improved the computational efficiency, and many great advantages of this numerical scheme have been reported in the numerical simulation with large damping parameters.  The error estimate has been theoretically established in the discrete $\ell^{\infty}(0,T; \ell^2)  \cap \ell^2(0,T; H_h^1)$ norm, under suitable regularity assumptions and reasonable ratio between the time step-size and the spatial mesh-size. The key difficulty of the theoretical analysis is associated with the fact that the projection step is highly nonlinear and non-convex. To overcome this subtle difficulty, we build a stability estimate for the projection step, which plays a crucial role in the derivation of the convergence analysis for the numerical scheme.

%In this paper, we have proposed and analyzed a second-order time stepping scheme to solve the LL equation. The second-order BDF is applied for temporal discretization and a linearized multistep approximation is used for the nonlinear coefficients on the right hand side of the equation. The resulting scheme avoids a well-known difficulty associated with the nonlinearity of the system, and its unique solvability is established via the monotonicity analysis of the system. In addition, an optimal rate convergence analysis is provided, by making use of a linearized stability analysis for the numerical error functions, in which the $W_h^{1,\infty}$ error estimate at the projection step has played an important role. Numerical experiments in both 1D and 3D cases are presented to verify the unconditional stability and the second-order accuracy in both space and time, and applied to the domain wall dynamics driven by an external field. The technique presented here may be applicable to the model for current-driven domain wall dynamics \cite{ChenGarciaCerveraYang:2015}, which shall be explored as a future project.

%\appendix
\section*{Appendix A. Proof of \cref{lem 6-0}}

\begin{proof}
	First of all, an $\| \cdot \|_\infty$ bound for the numerical error $\tilde{\e}_h$ can be derived, by the \textit{a-priori} estimate~\eqref{a priori-0}: 
	\begin{equation} 
	\|  \tilde{\e}_h \|_\infty \le  \gamma h^{-\frac12} (   \|  \tilde{\e}_h \|_2 
	+ \| \nabla_h \tilde{\e}_h \|_2 ) 
	\le   \gamma h^{-\frac12} \cdot  \dtk^{\frac{11}{8}}  
	\le \frac14  \dtk^{\frac{3}{4}} , 
	\label{a priori-0-2} 
	\end{equation}      
	provided that $\dtk$ and $h$ are sufficiently small, and under the linear refinement requirement $C_1 h \le \dtk \le C_2 h$. Notice that the inverse inequality~\eqref{inverse-1} has been applied in the first step. In turn, we observe that the following bounds are available for the numerical profile $\tilde{\m}_h$:
	\begin{align}
	&  1 -  \frac14 \dtk^{\frac34}  \le | \tilde{\m}_h | \le 1 + \frac14 \dtk^{\frac34} ,  
	\quad \mbox{at a point-wise level} ,  \label{lem 6-1-0}
	\\
	&  
	\| \nabla_h \tilde{\e}_h \|_4 \le \gamma h^{-\frac34} \| \nabla_h \tilde{\e}_h \|_2 
	\le \gamma h^{-\frac34}  \cdot  \frac12  \dtk^{\frac{11}{8}}  
	\le \dtk^{\frac12} , \label{lem 6-1-1}	
	\\	
	& 	
	\| \nabla_h \tilde{\m}_h \|_4 
	\le \| \nabla_h \um_h \|_4 	 + \| \nabla_h \tilde{\e}_h \|_4 
	\le C^* + 1 := M .   \label{lem 6-1}
	\end{align}   
	Again, the inverse inequality~\eqref{inverse-1} has been applied in the derivation. 	
	
	A careful calculation indicates that 
	\begin{equation} 
	\begin{aligned}
	{\e}_h =& \um_h - {\m}_h 
	= \um_h - \tilde{\m}_h  + \tilde{\m}_h  - \frac{\tilde{\m}_h}{|\tilde{\m}_h|}  
	= \tilde{\e}_h + \frac{\tilde{\m}_h}{|\tilde{\m}_h|}( |\tilde{\m}_h| -1 ) ,  
	\quad \mbox{so that} 
	\\
	\tilde{\e}_h = & \e_h + \tilde{\e}_{h, c} ,  \quad 
	\tilde{\e}_{h, c} := \frac{\tilde{\m}_h}{|\tilde{\m}_h|} ( 1 - |\tilde{\m}_h| ) .     
	\end{aligned} 
	\label{lem 6-3}
	\end{equation} 
	Meanwhile, at a fixed grid point $(\hat{x}_i, \hat{y}_j, \hat{z}_{\ell})\in\Omega_h^0$, we look at the triangle formed by the vectors: $\tilde{\m}_h$, $\tilde{\e}_h$ and $\um_h$. In particular, we denote the angle between $\tilde{\m}_h$ and $\um_h$ as $\theta$. Since the lengths of these three vectors have the following estimates: 
	\begin{equation*} 
	1 -  \frac14 \dtk^{\frac34}  \le | \tilde{\m}_h | \le 1 + \frac14 \dtk^{\frac34} ,  \, \, \, 
	| \tilde{\e}_h | \le \frac14  \dtk^{\frac{3}{4}}  , \, \, \, 
	| \um_h | \equiv 1 , 
	\end{equation*}    
	a careful application of Sine law indicates that 
	\begin{equation} 
	0 \le \sin \theta \le \frac14  \dtk^{\frac{3}{4}} . 
	\label{lem 6-3-2}
	\end{equation}   
	
	And also, we look at the triangle formed by the vectors: $\m_h$, $\e_h$ and $\um_h$. Two sides have equal lengths: $| \m_h | = | \um_h | =1$, and the angle between $\m_h$ and $\um_h$ is exactly $\theta$, since $\m_h = \frac{\tilde{\m}_h}{|\tilde{\m}_h|}$ is in the same direction as $\tilde{\m}_h$. In turn, the angle between vectors $\m_h$ and $\e_h$ is given by $\varphi' = \frac{\pi}{2} - \frac{\theta}{2}$. Because of this fact, the angle between vectors $\tilde{\m}_h$ and $\e_h$ is the same as $\varphi' = \frac{\pi}{2} - \frac{\theta}{2}$, 
	
	Subsequently, we denote the angle between vectors $\tilde{e}_{h, c}$ and $\e_h$ as $\varphi$. In fact, this angle has the following representation 
	\begin{equation} 
	\begin{aligned} 
	& 
	\varphi = \varphi' = \frac{\pi}{2} - \frac{\theta}{2} ,  \quad 
	\mbox{if} \, \, \, |\tilde{\m}_h| \le 1;  \quad 
	\varphi =  \pi - \varphi' = \frac{\pi}{2} + \frac{\theta}{2} ,  \quad 
	\mbox{if} \, \, \, |\tilde{\m}_h| > 1  . 
	\end{aligned}  
	\label{lem 6-3-3}
	\end{equation} 
	In either case, the following estimate is valid: 
	\begin{equation} 
	| \cos \varphi | = | \sin \frac{\theta}{2} |  \le \sin \theta \le \frac14  \dtk^{\frac{3}{4}} . 
	\label{lem 6-3-4}
	\end{equation}  
	Consequently, the definition of the point-wise inner product implies the following estimate 
	\begin{equation} 
	\begin{aligned} 
	| \tilde{\e}_h |^2 = & | \e_h + \tilde{\e}_{h, c} |^2 
	= | \e_h |^2 + | \tilde{\e}_{h, c} |^2 + 2  \e_h \cdot \tilde{\e}_{h, c} 
	\\
	= & 
	| \e_h |^2 + | \tilde{\e}_{h, c} |^2 + 2  | \e_h | \cdot | \tilde{\e}_{h, c} | \cdot \cos \varphi 
	\\
	\ge & 
	| \e_h |^2 + | \tilde{\e}_{h, c} |^2 - 2  | \e_h | \cdot | \tilde{\e}_{h, c} | 
	\cdot \frac14  \dtk^{\frac{3}{4}}  
	=  
	| \e_h |^2 + | \tilde{\e}_{h, c} |^2 - \frac12  \dtk^{\frac{3}{4}}   
	| \e_h | \cdot | \tilde{\e}_{h, c} | 
	\\
	\ge & 
	| \e_h |^2 + | \tilde{\e}_{h, c} |^2 - ( \dtk^{\frac{5}{4}}   
	| \e_h |^2 + \frac14 \dtk^{\frac14}  | \tilde{\e}_{h, c} |^2 )   
	\\
	\ge & 
	( 1 - \dtk^{\frac54} ) | \e_h |^2 + ( 1 - \dtk^{\frac14} ) | \tilde{\e}_{h} - \e_h |^2 .       
	\end{aligned}  
	\label{lem 6-3-5}
	\end{equation}   
	Notice that this estimate is valid at a point-wise level, for any fixed mesh point $(i, j, \ell)$. Therefore, a summation in space leads to the first inequality~\eqref{lem 6-2}.           	
	
	%	It again follows from the \cref{ccclemC1} and $k\leq \mathcal{C}h$ results in
	%	\begin{align}
	%	\|{\e}_h^n\|_{\infty} &\leq \frac{\mathcal{C}\|{\e}_h^n\|_2}{h^{3/2}}\leq \frac{\mathcal{C}(k^2+h^2)}{h^{3/2}}\leq \frac{1}{2},
	%	\end{align}		
	To derive the second inequality~\eqref{lem 6-2-2}, we will focus on the $D_x$ part in the discrete gradient; the analysis for the $D_y$ and $D_z$ parts can be performed in a similar manner. We begin with the following expansion: 
	\begin{equation} 
	\begin{aligned}
	D_x \e_h &= D_x {\um}_h - D_x \frac{\tilde{\m}_h}{|\tilde{\m}_h|}
	= D_x \left[ \frac{{\um}_h}{|\tilde{\m}_h|} - \frac{\tilde{\m}_h}{|\tilde{\m}_h|} \right]
	+ D_x \left[ \um_h - \frac{{\um}_h}{|\tilde{\m}_h|} \right] 
	\\
	&= D_x \frac{\tilde{\e}_h}{|\tilde{\m}_h|} - D_x \left[ \frac{{\um}_h}{|\tilde{\m}_h|}( 1 - |\tilde{\m}_h| )\right] 
	\\
	&= D_x \frac{\tilde{\e}_h}{|\tilde{\m}_h|} - D_x \left[  \frac{{\um}_h}{|\tilde{\m}_h|} 
	\frac{( \um_h + \tilde{\m}_h ) \cdot ( \um_h - \tilde{\m}_h )}{1+|\tilde{\m}_h|}  \right]   
	\\
	&= D_x \frac{\tilde{\e}_h}{|\tilde{\m}_h|} - D_x \left[  \um_h  
	\frac{( \um_h + \tilde{\m}_h ) \cdot \tilde{\e}_h }{ |\tilde{\m}_h| + |\tilde{\m}_h|^2}  
	\right]  
	\\
	&= D_x \frac{\tilde{\e}_h}{|\tilde{\m}_h|} - D_x \left[  \um_h  
	\frac{( 2 \um_h - \tilde{\e}_h ) \cdot \tilde{\e}_h }{ |\tilde{\m}_h| + |\tilde{\m}_h|^2}  
	\right]  
	\\
	&= D_x \frac{\tilde{\e}_h}{|\tilde{\m}_h|} - D_x \Big(  \um_h  
	\frac{2 \um_h  \cdot \tilde{\e}_h }{ |\tilde{\m}_h| + |\tilde{\m}_h|^2}  \Big)  
	+ D_x \Big(  \um_h  
	\frac{ | \tilde{\e}_h |^2 }{ |\tilde{\m}_h| + |\tilde{\m}_h|^2}  \Big)  ,               
	\end{aligned}	
	\label{lem 6-6}
	\end{equation} 	
	in which the identity $\tilde{\m}_h = \um_h - \tilde{\e}_h$ has been applied at the last two steps. Meanwhile, at each numerical mesh cell, from $(i, j, \ell)$ to $(i+1, j, \ell)$, the identity~\eqref{expansion-1} is valid, so that the following expansions are able to be made at the center location $(i+1/2, j, \ell)$ over the numerical cell: 
	\begin{equation} 
	\begin{aligned} 
	& 
	D_x \frac{\tilde{\e}_h}{|\tilde{\m}_h|}  - D_x \Big(  \um_h  
	\frac{2 \um_h  \cdot \tilde{\e}_h }{ |\tilde{\m}_h| + |\tilde{\m}_h|^2}  \Big)  
	= J_1 + J_2 + J_3 + J_4 + J_5 , 
	\\  
	J_1 = & A_x \Big( \frac{1}{|\tilde{\m}_h|}  \Big) D_x \tilde{\e}_h 
	-   A_x \um_h  A_x \Big( \frac{2}{ |\tilde{\m}_h| + |\tilde{\m}_h|^2}  \Big)     
	( A_x \um_h  \cdot D_x \tilde{\e}_h )  , 
	\\  
	J_2 = & - \Big( A_x^{(2)} \Big( \frac{1}{|\tilde{\m}_h|^2}  \Big) D_x |\tilde{\m}_h| \Big) 
	\tilde{\e}_h ,\quad
	J_3 =  
	- 2 ( D_x \um_h )   
	A_x \Big( \frac{ \um_h  \cdot \tilde{\e}_h }{|\tilde{\m}_h| + |\tilde{\m}_h|^2}  \Big) ,      
	\\
	J_4 = & 
	( A_x \um_h )  A_x ( \um_h  \cdot \tilde{\e}_h  )    
	A_x^{(2)} \Big( \frac{ 2 }{ ( |\tilde{\m}_h| + |\tilde{\m}_h|^2 )^2}  \Big)  
	\\
	& 
	\cdot ( D_x  |\tilde{\m}_h| + 2 A_x \tilde{\m}_h \cdot D_x \tilde{\m}_h )  ,   
	\\
	J_5 = & 
	-   A_x \um_h  A_x \Big( \frac{2}{ |\tilde{\m}_h| + |\tilde{\m}_h|^2}  \Big)     
	( D_x \um_h  \cdot A_x \tilde{\e}_h )  ,                      
	\end{aligned} 
	\label{expansion-2-1}
	\end{equation} 
	in which the nonlinear average operator $A_x^{(2)}$ is introduced as 
	\begin{equation} 
	A_x^{(2)} \Big( \frac{1}{ ( f_h )^2 } \Big)_{i,j,\ell}  
	= \frac{1}{ ( f_h )_{i,j,\ell} (f_h )_{i+1,j,\ell} } ,  \, \,  
	\mbox{for scalar grid function $f_h$} . 
	\label{expansion-2-2}
	\end{equation}   
	Moreover, by the point-wise \textit{a-priori} estimate~\eqref{lem 6-1-0} for $\tilde{\m}_h$, the following bounds are available: 
	\begin{equation} 
	\begin{aligned} 
	& 
	\frac{1}{1 + \frac14 \dtk^{\frac34} } \le A_x \Big( \frac{1}{|\tilde{\m}_h|}  \Big)  
	\le \frac{1}{1 - \frac14 \dtk^{\frac34} }   , 
	\\
	&  
	1 - \frac32 \dtk^{\frac34} \le 
	A_x \Big( \frac{2}{ |\tilde{\m}_h| + |\tilde{\m}_h|^2}  \Big) 
	\le 1 + \frac32 \dtk^{\frac34}  ,    
	\\
	& 
	\frac{1}{( 1 + \frac14 \dtk^{\frac34} )^2} \le 
	A_x^{(2)} \Big( \frac{1}{|\tilde{\m}_h|^2}  \Big)       
	\le \frac{1}{(1 - \frac14 \dtk^{\frac34})^2 } ,   
	\\
	& 
	\frac{1}{2 ( 1 + \frac14 \dtk^{\frac34} )^4} \le 
	A_x^{(2)} \Big( \frac{ 2 }{ ( |\tilde{\m}_h| + |\tilde{\m}_h|^2 )^2}  \Big)      
	\le \frac{1}{(1 - \frac14 \dtk^{\frac34})^4 }  . 
	\end{aligned} 
	\label{expansion-2-3}
	\end{equation}   
	
	For the term $J_1$, we make the following decomposition $J_1 = J_{11} + J_{12} + J_{13} $ to facilitate the analysis 
	\begin{equation} 
	\begin{aligned}  
	J_{11} =  & D_x \tilde{\e}_h 
	-   A_x \um_h  ( A_x \um_h  \cdot D_x \tilde{\e}_h )  ,   \quad  
	J_{12} =   
	\Big( A_x \Big( \frac{1}{|\tilde{\m}_h|}  \Big)- 1 \Big) D_x \tilde{\e}_h 	, 
	\\
	J_{13} = & 
	-   A_x \um_h  \Big( 
	A_x \Big( \frac{2}{ |\tilde{\m}_h| + |\tilde{\m}_h|^2}  \Big)   - 1 \Big)   
	( A_x \um_h  \cdot D_x \tilde{\e}_h )   .  
	\end{aligned} 
	\label{est-J1-1}
	\end{equation}       
	For the quantity $J_{11}$, we see that 
	\begin{equation} 
	\begin{aligned} 
	| J_{11} |^2  = & | D_x \tilde{\e}_h |^2 
	+ | A_x \um_h |^2 ( A_x \um_h  \cdot D_x \tilde{\e}_h )^2 
	-  2  ( A_x \um_h  \cdot D_x \tilde{\e}_h )^2   
	\\
	\le & 
	| D_x \tilde{\e}_h |^2 
	-  ( A_x \um_h  \cdot D_x \tilde{\e}_h )^2   ,   \quad 
	\mbox{so that} \, \, \,  | J_{11} | \le | D_x \tilde{\e}_h | ,   
	\end{aligned} 
	\label{est-J1-2}
	\end{equation}  
	in which the last step comes from the fact that $| A_x \um_h |^2  \le 1$. For the quantity $J_{12}$, the \textit{a-priori} bounds~\eqref{expansion-2-3} imply that 
	\begin{equation} 
	J_{12} =   
	\Big| A_x \Big( \frac{1}{|\tilde{\m}_h|}  \Big)- 1 \Big| \cdot | D_x \tilde{\e}_h | 
	\le \frac12 \dtk^{\frac34}  | D_x \tilde{\e}_h |  .  \label{est-J1-3}
	\end{equation}  	
	Similarly, the quantity $J_{13}$ can be analyzed as 
	\begin{equation} 
	\begin{aligned} 
	J_{13} = & 
	| A_x \um_h | \cdot \Big| 
	A_x \Big( \frac{2}{ |\tilde{\m}_h| + |\tilde{\m}_h|^2}  \Big)   - 1 \Big|    
	\cdot |  A_x \um_h | \cdot | D_x \tilde{\e}_h |    
	\\
	\le & 
	( 1 + C h^2 )^2 ( \frac{1}{(1 - \frac14 \dtk^{\frac34})^2 } | D_x \tilde{\e}_h |   
	\le ( 1 + \frac34 \dtk^{\frac34} ) | D_x \tilde{\e}_h |  .      
	\end{aligned} 
	\label{est-J1-4}
	\end{equation}       
	Consequently, a combination of~\eqref{est-J1-2}-\eqref{est-J1-4} leads to 
	\begin{equation} 
	| J_1 | \le | J_{11} | + | J_{12} | + | J_{13} | 
	\le ( 1 + \frac54 \dtk^{\frac34} ) | D_x \tilde{\e}_h | 	,  
	\label{est-J1-5}
	\end{equation} 
	which turns out to be a point-wise inequality. In turn, a summation in space implies that 
	\begin{equation} 
	\| J_1 \|_2 \le ( 1 + \frac54 \dtk^{\frac34} ) \| D_x \tilde{\e}_h \|_2 .   
	\label{est-J1-6}
	\end{equation} 
	
	For the term $J_2$, we make use of the \textit{a-priori} bounds~\eqref{expansion-2-3}, as well as the fact that $| D_x | \tilde{\m}_h | | \le | D_x \tilde{\m}_h |$, so that an application of discrete H\"older inequality gives 
	\begin{equation} 
	\begin{aligned} 
	\| J_2 \|_2 \le &  \Big\| A_x^{(2)} \Big( \frac{1}{|\tilde{\m}_h|^2} \Big) \Big\|_\infty 
	\cdot \| D_x \tilde{\m}_h \|_4 \cdot \| \tilde{\e}_h  \|_4 
	\le ( 1 + \frac34 \dtk^{\frac34} ) M   \| \tilde{\e}_h  \|_4 , 
	\end{aligned} 
	\label{est-J2}
	\end{equation}     
	in which the \textit{a-priori} $W_h^{1,4}$ bound~\eqref{lem 6-1} for the numerical profile $\tilde{\m}_h$ has been applied. The other terms in the expansion~\eqref{expansion-2-1} can be analyzed in a similar manner.   
	\begin{equation} 
	\begin{aligned} 
	\| J_3 \|_2 \le & 
	\| D_x \um_h \|_\infty    
	\cdot \max \Big( \frac{ 2 }{|\tilde{\m}_h| + |\tilde{\m}_h|^2}  \Big) 
	\cdot \| \um_h \|_\infty \cdot \| \tilde{\e}_h  \|_2     
	\\
	\le & 
	C^* ( 1 + \frac34 \dtk^{\frac34} ) \cdot 1 \cdot  \| \tilde{\e}_h  \|_2   
	\le   ( 1 + \frac34 \dtk^{\frac34} ) M  \| \tilde{\e}_h  \|_2 , 
	\\
	\| J_4 \|_2 \le & 
	\| \um_h \|_\infty^2  \cdot \| \tilde{\e}_h \|_4     
	\cdot \max \Big( \frac{ 2 }{ ( |\tilde{\m}_h| + |\tilde{\m}_h|^2 )^2}  \Big)  
	\\
	& 
	\cdot ( 1 + 2 \| \tilde{\m}_h \|_\infty ) \| D_x \tilde{\m}_h \|_4   
	\le \frac32 ( 1 + \frac32 \dtk^{\frac34} )  M \| \tilde{\e}_h \|_4 ,        
	\\
	\| J_5 \|_2 \le & 
	\| \um_h  \|_\infty  \cdot \max \Big( \frac{2}{ |\tilde{\m}_h| + |\tilde{\m}_h|^2}  \Big)     
	\cdot \| D_x \um_h  \|_\infty \cdot \| \tilde{\e}_h \|_2 
	\\
	\le & 
	( 1 + \frac32 \dtk^{\frac34} ) C^* \| \tilde{\e}_h \|_2  
	\le ( 1 + \frac32 \dtk^{\frac34} ) M \| \tilde{\e}_h \|_2  .                       
	\end{aligned} 
	\label{est-J-more}  
	\end{equation}  
	Therefore, a substitution of~\eqref{est-J1-6}, \eqref{est-J2} and \eqref{est-J-more} into~\eqref{expansion-2-1} leads to 
	\begin{equation} 
	\begin{aligned} 
	& 
	\Big\| D_x \frac{\tilde{\e}_h}{|\tilde{\m}_h|}  - D_x \Big(  \um_h  
	\frac{2 \um_h  \cdot \tilde{\e}_h }{ |\tilde{\m}_h| + |\tilde{\m}_h|^2}  \Big)   \Big\|_2 
	\\
	\le & 
	( 1 + \frac54 \dtk^{\frac34} ) \| D_x \tilde{\e}_h \|_2 
	+ 3M ( \| \tilde{\e}_h \|_2 + \| \tilde{\e}_h \|_4 ) . 
	\end{aligned} 
	\label{est-gradient-2} 
	\end{equation}   
	
	The analysis for the last term on the right hand side of~\eqref{lem 6-6} can be similarly carried out; some technical details are skipped for the sake of brevity. 
	\begin{equation} 
	\begin{aligned} 
	& 
	\Big\| D_x \Big(  \um_h  
	\frac{ | \tilde{\e}_h |^2 }{ |\tilde{\m}_h| + |\tilde{\m}_h|^2}  \Big)  \Big\|_2   
	\\
	\le & 
	{\mathcal C} \Big(  \|  \um_h  \|_\infty \cdot \max 
	\Big( \frac{2 }{ |\tilde{\m}_h| + |\tilde{\m}_h|^2}  \Big)  
	\cdot \| \tilde{\e}_h \|_\infty  \cdot \| D_x \tilde{\e}_h \|_2   
	\\
	& 
	+  \|  D_x \um_h  \|_\infty \cdot \max 
	\Big( \frac{1}{ |\tilde{\m}_h| + |\tilde{\m}_h|^2}  \Big)  
	\cdot \| \tilde{\e}_h \|_\infty  \cdot \| \tilde{\e}_h \|_2     
	\\
	& 
	+ \|  \um_h  \|_\infty \cdot \max 
	\Big( \frac{1}{ ( |\tilde{\m}_h| + |\tilde{\m}_h|^2 )^2 }  \Big)  
	\cdot ( 1 + 2 \| \tilde{\m}_h \|_\infty ) \| D_x \tilde{\m}_h \|_4    
	\\
	&     
	\cdot \| \tilde{\e}_h \|_\infty  \cdot \| \tilde{\e}_h \|_4  \Big)                       
	\\
	\le & 
	{\mathcal C}  \dtk^{\frac34} ( \| D_x \tilde{\e}_h \|_2  
	+  \| \tilde{\e}_h \|_2 + \| \tilde{\e}_h \|_4 ) 
	\le   \dtk^{\frac58} ( \| D_x \tilde{\e}_h \|_2  
	+  \| \tilde{\e}_h \|_2 + \| \tilde{\e}_h \|_4 )  .     	
	\end{aligned} 
	\label{est-gradient-3}
	\end{equation} 	
	
	Finally, a substitution of~\eqref{est-gradient-2} and \eqref{est-gradient-3} into \eqref{lem 6-6} yields 
	\begin{equation} 
	\begin{aligned} 
	\| D_x \e_h \|_2 \le  & 
	( 1 + 2 \dtk^{\frac58} ) \| D_x \tilde{\e}_h \|_2 
	+ ( 3M + 1) ( \| \tilde{\e}_h \|_2 + \| \tilde{\e}_h \|_4 )  ,  \quad \mbox{so that} 
	\\
	\| D_x \e_h \|_2^2 \le  & 
	( 1 + 5 \dtk^{\frac58} ) \| D_x \tilde{\e}_h \|_2^2 
	+ ( 3M + 1)^2 ( \| \tilde{\e}_h \|_2 + \| \tilde{\e}_h \|_4 )^2 
	\\
	& 
	+ 4 ( 3M+1) \| D_x \tilde{\e}_h \|_2 ( \| \tilde{\e}_h \|_2 + \| \tilde{\e}_h \|_4 )    
	\\
	\le  & 
	( 1 + 5 \dtk^{\frac58} ) \| D_x \tilde{\e}_h \|_2^2 
	+ 2 ( 3M + 1)^2 ( \| \tilde{\e}_h \|_2^2 + \| \tilde{\e}_h \|_4^2 ) 
	\\
	& 
	+ \frac{\delta}{4} \| D_x \tilde{\e}_h \|_2^2 
	+ 16 (3 M +1)^2 \delta^{-1} ( \| \tilde{\e}_h \|_2 + \| \tilde{\e}_h \|_4 )^2  
	\\
	\le  & 
	( 1 + \frac{\delta}{2} ) \| D_x \tilde{\e}_h \|_2^2 
	+ ( 32 \delta^{-1}  + 2) ( 3M + 1)^2 ( \| \tilde{\e}_h \|_2^2 + \| \tilde{\e}_h \|_4^2 )   ,  
	\end{aligned}                	
	\label{est-gradient-4-1}
	\end{equation} 	
	for any $\delta > 0$, provided that $\dtk$ is sufficiently small. Similar estimates can be derived for the gradient in the $y$ and $z$ directions; the technical details are skipped for the sake of brevity. 
	\begin{equation} 
	\begin{aligned}
	\| D_y \e_h \|_2^2 \le & 
	( 1 + \frac{\delta}{2} ) \| D_y \tilde{\e}_h \|_2^2 
	+ ( 32 \delta^{-1}  + 2) ( 3M + 1)^2 ( \| \tilde{\e}_h \|_2^2 + \| \tilde{\e}_h \|_4^2 )   , 
	\\
	\| D_z \e_h \|_2^2 \le & 
	( 1 + \frac{\delta}{2} ) \| D_z \tilde{\e}_h \|_2^2 
	+ ( 32 \delta^{-1}  + 2) ( 3M + 1)^2 ( \| \tilde{\e}_h \|_2^2 + \| \tilde{\e}_h \|_4^2 )   .                       
	\end{aligned}	
	\label{est-gradient-4-2} 
	\end{equation} 	
	Then we arrive at 
	\begin{equation} 
	\| \nabla_h \e_h \|_2^2 \le  
	( 1 + \frac{\delta}{2} ) \| \nabla_h \tilde{\e}_h \|_2^2 
	+ 3 ( 32 \delta^{-1}  + 2) ( 3M + 1)^2 ( \| \tilde{\e}_h \|_2^2 + \| \tilde{\e}_h \|_4^2 ) .              	
	\label{est-gradient-4-3}
	\end{equation} 	     
	Meanwhile, by the discrete Sobolev inequality~\eqref{Sobolev-1} (in Lemma~\ref{lem: Sobolev-1}), we get 
	\begin{align} 
	\| \tilde{\e}_h \|_4 
	\le  \mathcal{C}  ( \| \tilde{\e}_h \|_2  	
	+   	\| \tilde{\e}_h \|_2^\frac14 \cdot \| \nabla_h \tilde{\e}_h \|_2^\frac34 ) , 
	\label{est-gradient-4-4} 
	\end{align}
	so that 
	\begin{equation}  
	\begin{aligned} 
	& 
	3 ( 32 \delta^{-1}  + 2) ( 3M + 1)^2 \| \tilde{\e}_h \|_4^2 
	\\
	\le & 3 ( 32 \delta^{-1}  + 2) ( 3M + 1)^2 \mathcal{C}  ( \| \tilde{\e}_h \|_2^2  	
	+   	\| \tilde{\e}_h \|_2^\frac12 \cdot \| \nabla_h \tilde{\e}_h \|_2^\frac32 ) 
	\\
	\le & 
	{\mathcal C}_\delta \| \tilde{\e}_h \|_2^2   
	+ \frac{\delta}{2}  \| \nabla_h \tilde{\e}_h \|_2^2  ,   \quad 
	\forall \delta > 0 ,         
	\end{aligned} 
	\label{est-gradient-4-5} 
	\end{equation}
	in which the Young's inequality has been applied in the last step. Going back~\eqref{est-gradient-4-3}, we obtain 
	\begin{equation} 
	\begin{aligned} 
	\| \nabla_h \e_h \|_2 \le  & 
	( 1 + \frac{\delta}{2} ) \| \nabla_h \tilde{\e}_h \|_2 
	+ \frac{\delta}{2}  \| \nabla_h \tilde{\e}_h \|_2 
	+ {\mathcal C}_\delta \| \tilde{\e}_h \|_2  
	\le  
	( 1 + \delta ) \| \nabla_h \tilde{\e}_h \|_2 
	+ {\mathcal C}_\delta \| \tilde{\e}_h \|_2 ,     
	\end{aligned}           	
	\label{est-gradient-4-6}
	\end{equation} 	     
	provided that $k$ is sufficiently small. This finishes the proof of Lemma~\ref{lem 6-0}. 	
\end{proof}

\section*{Appendix B. Proof of \cref{lem 7-0}}

\begin{proof}
Here $\um_h^{(1)}$ and  $\um_h^{(2)}$ serve as the exact solution at different time steps and $\tilde{\m}_h^{(1)},\tilde{\m}_h^{(2)}\in X$ are the corresponding numerical solutions.

     The $\| \cdot \|_\infty$ bounds for the error functions $\tilde{\e}_h^{(q)}$ can be derived, with the help of the \textit{a-priori} estimate~\eqref{a priori-n-1}: 
\begin{equation} 
\begin{aligned} 
  & 
    \|  \tilde{\e}_h^{(q)} \|_\infty \le  \gamma h^{-\frac12} (   \|  \tilde{\e}_h^{(q)} \|_2 
    + \| \nabla_h \tilde{\e}_h^{(q)} \|_2 ) 
    \le   \gamma h^{-\frac12} \cdot   \dtk^{\frac{11}{8}}  
    \le \frac14  \dtk^{\frac{3}{4}}  , 
\\
  & 
  \mbox{so that} \, \,  
  1 -  \frac14 \dtk^{\frac34}  \le | \tilde{\m}_h^{(q)} | \le 1 + \frac14 \dtk^{\frac34} ,  
	\, \,  \mbox{at a point-wise level, \, $q=1, 2$}. 
\end{aligned} 
     \label{a priori-n-3} 
\end{equation}      
%provided that $\dtk$ and $h$ are sufficiently small, and under the linear refinement requirement $C_1 h \le \dtk \le C_2 h$. 
	    
For the numerical error functions at different time steps, the decomposition~\eqref{lem 6-3} is still valid: 
\begin{equation} 
      \tilde{\e}_h^{(q)} = \e_h^{(q)} + \tilde{\e}_{h, c}^{(q)} ,  \quad 
        \tilde{\e}_{h, c}^{(q)} := \frac{\tilde{\m}_h^{(q)}}{|\tilde{\m}_h^{(q)} |}  
        ( 1 - | \tilde{\m}_h^{(q)} | ) .  \label{lem 7-2}
\end{equation} 
At a fixed grid point $(\hat{x}_i, \hat{y}_j, \hat{z}_k)$, we look at the triangle formed by the vectors: $\tilde{\m}_h^{(q)}$, $\tilde{\e}_h^{(q)}$, $\um_h^{(q)}$, and denote the angle between $\tilde{\m}_h^{(q))}$ and $\um_h^{(q)}$ as $\theta_q$. In turn, the estimate~\eqref{lem 6-3-2} is laid for each $\theta_q$: %since the lengths of these three vectors have the following estimates: 
\begin{equation}  
   \begin{aligned} 
     & 
   1 -  \frac14 \dtk^{\frac34}  \le | \tilde{\m}_h^{(q)} | \le 1 + \frac14 \dtk^{\frac34} ,  \, \, \, 
    | \tilde{\e}_h | \le \frac14  \dtk^{\frac{3}{4}}  , \, \, \, 
    | \um_h^{(q)} | \equiv 1 , 
\\
  & 
  0 \le \sin \theta_j \le \frac14  \dtk^{\frac{3}{4}} , \quad q= 1, 2 . 
\end{aligned} 
  \label{lem 7-3-1}
\end{equation}   
Similarly, in the triangle formed by the vectors: $\m_h^{(q)}$, $\e_h^{(q)}$, $\um_h^{(q)}$, two sides have equal lengths: $| \m_h^{(q)} | = | \um_h^{(q)} | =1$, and the angle between $\m_h^{(q)}$ and $\um_h^{(q)}$ is exactly $\theta_q$. In turn, the angle between vectors $\m_h^{(q)}$ and $\e_h^{(q)}$, as well as the angle between vectors $\um_h^{(q)}$ and $\e_h^{(q)}$ is given by $\varphi^{(q)} = \frac{\pi}{2} - \frac{\theta_q}{2}$. 

Meanwhile, we denote the angle between $\um_h^{(1)}$ and $\um_h^{(2)}$ as $\theta^*$. By the \textit{a-priori} assumption and the fact that $| \um_h^{(1)} | = | \um_h^{(2)} | =1$, we have an estimate for $\theta^*$: 
\begin{equation} 
  2 \sin \frac{\theta^*}{2} \le \frac14 \dtk^{\frac78} . 
  \label{est-theta-1} 
\end{equation} 
Subsequently, we denote the angle between $\tilde{\m}_h^{(1)}$ and $\e_h^{(2)}$ as $\phi^*$. By the above analyses, we see that 
\begin{equation} 
  \phi^* = \varphi^{(2)} \pm ( \theta_1 + \theta^* )  
  = \frac{\pi}{2} - \frac{\theta_2}{2} \pm ( \theta_1 + \theta^* )  .  
    \label{est-phi-1} 
\end{equation}   
In either case, $\phi^* = \frac{\pi}{2} - \frac{\theta_2}{2} + ( \theta_1 + \theta^* )$ or $\phi^* = \frac{\pi}{2} - \frac{\theta_2}{2} - ( \theta_1 + \theta^* )$, the following estimate is valid: 
\begin{equation} 
\begin{aligned} 
  | \cos \phi^* | = & | \sin ( \frac{\theta_2}{2} \pm ( \theta_1 + \theta^* ) ) |  
  \le \sin ( \frac{\theta_2}{2} + \theta_1 + \theta^* )  
\\
  \le & 
   \sin \frac{\theta_2}{2} + \sin \theta_1 + \sin \theta^*  
   \le   \sin \theta_2 + \sin \theta_1 + \sin \theta^*  
\\
  \le &   
  \frac14  \dtk^{\frac{3}{4}}  + \frac14  \dtk^{\frac{3}{4}} + \frac14 \dtk^{\frac78}  
  \le \frac34 \dtk^{\frac34} ,  
\end{aligned} 
   \label{lem 7-4-1}
\end{equation}  
in which we have used the triangular inequality, $\sin (s_1 + s_2 + s_3) \le \sin s_1 + \sin s_2 + \sin s_3$, provided that $s_1, s_2,s_3>0$ are sufficiently small. As a consequence, the definition of the point-wise inner product implies the following estimate 
\begin{equation} 
\begin{aligned} 
   \Big| ( \tilde{\e}_h^{(1)} - \e_h^{(1)} ) \cdot  \e_h^{(2)} \Big| 
   = &  | \tilde{\e}_h^{(1)} - \e_h^{(1)}  |  \cdot | \tilde{\e}_h^{(2)} | 
   \cdot | \cos \phi^* |  
\\
  \le & 
   | \tilde{\e}_h^{(1)} - \e_h^{(1)}  |  \cdot | \tilde{\e}_h^{(2)} | 
   \cdot   \frac34 \dtk^{\frac34}   
\\
  \le & 
  \dtk^{\frac14} | \tilde{\e}_h^{(1)} - \e_h^{(1)}  |^2  
  + \dtk^{\frac45} | \tilde{\e}_h^{(2)} |^2 .        
\end{aligned}  
  \label{lem 7-4-2}
\end{equation}   
Again, this estimate is valid at a point-wise level for $\x\in\Omega_h^0$. Therefore, a summation in space leads to the first inequality~\eqref{lem 7-1}.           	
\end{proof} 
\section*{Acknowledgments}
This work is supported in part by the grants NSFC 11971021 (J.~Chen), NSF DMS-2012669 (C.~Wang), NSFC 11771036 and 12171041 (Y.~Cai).

\bibliographystyle{amsplain}
\bibliography{references}
%\begin{thebibliography}{10}
%
%\bibitem {A} T. Aoki, \textit{Calcul exponentiel des op\'erateurs
%microdifferentiels d'ordre infini.} I, Ann. Inst. Fourier (Grenoble)
%\textbf{33} (1983), 227--250.
%
%\bibitem {B} R. Brown, \textit{On a conjecture of Dirichlet},
%Amer. Math. Soc., Providence, RI, 1993.
%
%\bibitem {D} R. A. DeVore, \textit{Approximation of functions},
%Proc. Sympos. Appl. Math., vol. 36,
%Amer. Math. Soc., Providence, RI, 1986, pp. 34--56.
%
%\end{thebibliography}

\end{document}